\renewcommand{\norm}[1]{\|#1\|}
\newcommand{\lrangle}[1]{\langle #1 \rangle}
\newcommand{\interior}{\mathrm{\scriptscriptstyle int}}
\newcommand{\exterior}{\mathrm{\scriptscriptstyle ext}}
\newcommand{\R}{\mathbb{R}}
\newcommand{\Rd}{\mathbb{R}^d}
\newcommand{\Zd}{\mathbb{Z}^d}
\newcommand{\rateCo}{c_{\mathrm{co}}}
\newcommand{\rateMix}{c_{\mathrm{mix}}}
\newcommand{\cstMix}{C_{\mathrm{mix}}}
\newcommand{\cstEquivDist}{c_{E}}
\newcommand{\bbS}{\mathbb{S}}
\newcommand{\bbZ}{\mathbb{Z}}
\newcommand{\bbR}{\mathbb{R}}
\newcommand{\bbN}{\mathbb{N}}
\newcommand{\bbT}{\mathbb{T}}
\newcommand{\calC}{\mathcal{C}}
\newcommand{\calF}{\mathcal{F}}
\newcommand{\calT}{\mathcal{T}}
\newcommand{\calU}{\mathcal{U}}
\newcommand{\calW}{\mathcal{W}}
\newcommand{\rmd}{\mathrm{d}}
\newcommand{\given}{\,|\,}
\newcommand{\CG}{\mathrm{CG}}
\newcommand{\cone}{\mathcal{Y}}
\newcommand{\clusterSet}{\mathrm{cl}}
\theoremstyle{plain}
\newtheorem{theorem}{Theorem}[section]
\newtheorem{lemma}[theorem]{Lemma}
\newtheorem{corollary}[theorem]{Corollary}
\newtheorem{remark}{Remark}[section]
\theoremstyle{definition}
\newtheorem{definition}{Definition}[section]
\newtheorem{obs}{Observation}
\author{S\'{e}bastien Ott}
\address{Dipartimento di Matematica e Fisica, Univ. Roma Tre, 00146 Roma, Italy}
\email{ott.sebast@gmail.com}
\date{\today}
\title[Existence of the I.C.L. in percolation models]{Existence and properties of connections decay rate for high temperature percolation models}
\begin{document}
	
\begin{abstract}
	We consider generic finite range percolation models on \(\Zd\) under a high temperature assumption (exponential decay of connection probabilities and exponential ratio weak mixing). We prove that the rate of decay of point-to-point connections exists in every directions and show that it naturally extends to a norm on \(\Rd\). This result is the base input to obtain fine understanding of the high temperature phase and is usually proven using correlation inequalities (such as FKG). The present work makes no use of such model specific properties.
\end{abstract}

\maketitle

\section{Introduction and results}

\subsection{Decay rate of connections}
Let \(P\) denote an edge-percolation measure on \(\Zd\). The central object of our investigation is(are) the rate(s) of exponential decay for point-to-point connection probabilities (two point functions):
\begin{definition}[Inverse correlation length]
	\label{def:ICL}
	Let \(s\in\bbS^{d-1}\). The point-to-point decay rates are
	\begin{equation}
	\begin{gathered}
	\overline{\nu}(s) = \limsup_{n\to\infty} -\frac{1}{n} \log P(0\leftrightarrow n s),\\
	\underline{\nu}(s) = \liminf_{n\to\infty} -\frac{1}{n} \log P(0\leftrightarrow n s).
	\end{gathered}
	\end{equation}
\end{definition}

\subsection{Motivation}

The main motivation of this work comes from the (supposed) \emph{universal behaviour} of two point functions in high temperature systems: they should decay exponentially with a well-defined rate and the pre-factor to this decay should be the one predicted by the Ornstein-Zernike theory~\cite{Ornstein+Zernike-1914,Zernike-1916}. See~\cite{Ott+Velenik-2019} for a review on this topic.

On the one hand some fairly satisfactory universal statements are available in perturbative regimes (\emph{very high temperature} regime), see~\cite{Abraham+Kunz-1977}. In the other hand, a non-perturbative approach (giving statements about the whole high temperature regime) has been developed over the past decades, proving the expected behaviour in various \emph{specific} models:~\cite{Abraham+Chayes+Chayes-1984,Chayes+Chayes-1986,Ioffe-1998,Campanino+Ioffe-2002,Campanino+Ioffe+Velenik-2003,Campanino+Ioffe+Velenik-2008,Ott-2020}. A recurrent ingredient in the proofs being the presence of correlation inequalities.

The latest non-perturbative approaches (mainly~\cite{Campanino+Ioffe+Velenik-2008} combined with refinements from~\cite{Ott+Velenik-2018} and~\cite{Ott-2020}) seem to be robust enough to tackle the problem (with some work...) for \emph{generic percolation models} under a high temperature assumption and conditionally on the decay rate existence as well as some of its properties validity. This latter condition is usually where correlation inequalities are crucially needed.

To give an idea of the problem, let us consider some translation invariant percolation model \(P\). When \(P\) satisfies the FKG inequality, one has \(P(0\leftrightarrow x+y)\geq P(0\leftrightarrow x\leftrightarrow x+y)\geq P(0\leftrightarrow x)P(0\leftrightarrow y)\). The equality \(\overline{\nu}=\underline{\nu}\equiv\nu\) is then easy consequence of Fekete's Lemma. One can further extend \(\nu\) by positive homogeneity. The above inequality directly implies that \(\nu\) satisfies the triangle inequality.

The main problem is that ``satisfying FKG inequality'' (or any other) is a very model specific property (which fails for some arbitrarily small perturbation of -for example- FK percolation) while \(\overline{\nu}=\underline{\nu}\) and \(\nu\) being a norm should be a generic property of high temperature models (which is a condition insensitive to sufficiently small perturbations). We therefore introduce a suitable notion of high temperature phase for percolation models and prove that the wanted properties of \(\nu\) hold for any model in this phase. To the best of the author's knowledge, this is the first non-perturbative proof of this type of result not relying on correlation inequalities.

\subsection{Results}

Our main result is (see Section~\ref{sec:def_notation} for missing definitions):
\begin{theorem}
	\label{thm:existence}
	Let \(E\subset \big\{\{i,j\}\subset \Zd\big\}\) be finite range, irreducible, invariant under translations. Let \(P\) be a  percolation measure on \(E\). Suppose that
	\begin{itemize}
		\item \(P\) is invariant under translations,
		\item \(P\) has the insertion tolerance property (Definition~\ref{def:insertion_tolerance}) with constant \(\theta>0\),
		\item \(P\) satisfies the exponential ratio weak mixing property (Definition~\ref{def:ratio_mix}) with rate \(\rateMix>0\) and constant \(\cstMix<\infty\) for the set of local connection events,
		\item there exists \(\rateCo>0\) such that \( P(0\leftrightarrow \Lambda_n^c)\leq e^{-\rateCo n}\) for any \(n\) large enough.
	\end{itemize}
	
	Then, for any \(s\in\bbS^{d-1}\),
	\begin{equation}
	\overline{\nu}(s) = \underline{\nu}(s) \equiv \nu(s).
	\end{equation}Moreover, the extension of \(\nu\) by positive homogeneity of order one defines a norm on \(\Rd\).
\end{theorem}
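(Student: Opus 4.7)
The plan is to establish approximate subadditivity of $a_N := -\log P(0 \leftrightarrow \lfloor N s\rfloor)$, namely $a_{m+n} \leq a_m + a_n + C$ for a fixed constant $C$, and then apply Fekete's subadditive lemma (to the shifted sequence $a_N + C$) to conclude existence of $\nu(s) = \lim_{N\to\infty} a_N/N$. In the FKG setting, strict subadditivity follows from $P(0 \leftrightarrow (m+n)s) \geq P(0 \leftrightarrow ms \leftrightarrow (m+n)s)$; here we substitute a two-ingredient surgery, using exponential ratio weak mixing to decouple local events near $0$ and near $(m+n)s$, and insertion tolerance to open a short bridge path through a neighborhood of $ms$.

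Fix $s \in \bbS^{d-1}$ and large $m, n$, and choose a \emph{fixed} bridge radius $K$ depending only on $\cstMix$ and $\rateMix$. Place a bridge box $B$ of radius $K$ around $\lfloor ms\rfloor$; let $\Gamma_L$ and $\Gamma_R$ be direction-adapted regions (fattened half-spaces or cones) containing $0$ and $\lfloor(m+n)s\rfloor$ respectively, disjoint from $B$ and separated from each other by at least $2K$. Introduce the local connection events
\begin{equation*}
	A_L = \{0 \leftrightarrow B \text{ via edges in } \Gamma_L\}, \qquad A_R = \{\lfloor(m+n)s\rfloor \leftrightarrow B \text{ via edges in } \Gamma_R\}.
\end{equation*}
By exponential ratio weak mixing, $P(A_L \cap A_R) \geq (1 - \cstMix e^{-2\rateMix K}) P(A_L) P(A_R) \geq \tfrac{1}{2} P(A_L) P(A_R)$ once $K$ exceeds a fixed threshold. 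On $A_L \cap A_R$ the configuration in $\Gamma_L \cup \Gamma_R$ canonically determines a pair of vertices $u_L \in \Gamma_L$, $u_R \in \Gamma_R$, each adjacent to $B$ and connected respectively to $0$ and $\lfloor(m+n)s\rfloor$; opening a deterministic lattice path of length $O(K)$ from $u_L$ to $u_R$ through $B$ via iterated insertion tolerance (conditional probability $\geq \theta^{O(K)}$, on edges disjoint from those witnessing $A_L \cap A_R$) realizes $\{0 \leftrightarrow (m+n)s\}$, hence $P(0 \leftrightarrow (m+n)s) \geq c\, \theta^{O(K)}\, P(A_L) P(A_R)$. One then bounds $P(A_L) \geq \theta^{O(K)} P(0 \leftrightarrow ms)$ (and analogously for $A_R$) in two steps: first, insertion tolerance along a straight path of length $O(K)$ from $\lfloor ms\rfloor$ to a specific target vertex $v_L$ adjacent to $B$ in $\Gamma_L$ reduces matters to $P(0 \leftrightarrow v_L)$; second, the decay hypothesis $P(0 \leftrightarrow \Lambda_R^c) \leq e^{-\rateCo R}$ localizes the cluster of $0$ inside $\Gamma_L$ up to a small additive error. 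Chaining all bounds yields $a_{m+n} \leq a_m + a_n + C$ with a fixed $C = C(K, \theta, d, \cstMix, \rateMix)$.

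This already gives $\overline\nu(s) = \underline\nu(s) = \nu(s)$. Positive homogeneity of the extension is tautological; symmetry $\nu(-s) = \nu(s)$ follows from translation invariance together with symmetry of $\leftrightarrow$; positivity $\nu(s) > 0$ follows from $P(0 \leftrightarrow Ns) \leq e^{-\rateCo c(s) N}$ for some direction-dependent $c(s) > 0$; the triangle inequality $\nu(s+t) \leq \nu(s) + \nu(t)$ is obtained by rerunning the bridging argument with bridge placed at $\lfloor Ns\rfloor$ along the segment $[0, \lfloor N(s+t)\rfloor]$, taking $-\log$, dividing by $N$, and letting $N \to \infty$. The main obstacle throughout is the reduction of $P(A_L)$ back to $P(0 \leftrightarrow ms)$: a naive localization of $\{0 \leftrightarrow ms\}$ to a ball of radius $O(m)$ may leak most of the mass, since the only a priori lower bound $P(0 \leftrightarrow ms) \geq \theta^{Cm}$ is controlled by $-\log\theta$ whereas the localization cost is controlled by the possibly smaller rate $\rateCo$. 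The remedy is to localize in a direction-adapted region whose complement still decays at rate $\rateCo$ and which is simultaneously separated from its mirror partner $\Gamma_R$ by $\geq 2K$; balancing this geometry, together with the irreducibility of $E$ (needed so that the bridge path exists as a lattice path), is the delicate bookkeeping, but is unobstructed by general principles.
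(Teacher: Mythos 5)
Your bridging direction (mixing to glue two connection events, insertion tolerance to open a short path through the bridge box) is the same mechanism the paper uses, and it is indeed the easy half. Two remarks on it: with a \emph{fixed} bridge radius \(K\), the ratio weak mixing error is not \(\cstMix e^{-2\rateMix K}\) but \(\cstMix\sum_{e\in F,e'\in F'}e^{-\rateMix \rmd(e,e')}\), which for two regions of diameter \(O(m)\) facing each other across a gap of width \(2K\) is of order \(\mathrm{poly}(m)\,e^{-2\rateMix K}\) (and diverges outright if \(\Gamma_L,\Gamma_R\) are genuinely infinite half-spaces). So \(K\) must grow like \(\log m\), the additive error in subadditivity becomes \(f(m)=O(\log m)\) rather than a constant, and you need a relaxed Fekete lemma. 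This is minor and fixable.

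The genuine gap is the reverse step \(P(A_L)\geq \theta^{O(K)}P(0\leftrightarrow ms)\), i.e.\ the claim that constraining the cluster of \(0\) to a direction-adapted region \(\Gamma_L\) disjoint from \(\Gamma_R\) costs only \(e^{-o(m)}\). The decay hypothesis localizes \(C_0\) in a \emph{ball} of radius \(\alpha m\) with \(\alpha=\nu(s)/\rateCo\), which can be large; it does not localize \(C_0\) in a half-space or cone whose complement contains \(\Gamma_R\). Since \(\Gamma_R\) necessarily contains points at distance only \(\approx m\) from the origin (just beyond the bridge box), the union bound gives \(P(0\leftrightarrow ms,\,C_0\not\subset\Gamma_L)\leq P(0\leftrightarrow \Gamma_L^{\comp})\leq e^{-\rateCo m}\), which does \emph{not} beat \(P(0\leftrightarrow ms)\approx e^{-\nu(s)m}\) whenever \(\nu(s)>\rateCo\) --- and without FKG or BK there is no soft way to retain the factor \(P(0\leftrightarrow ms)\) in this estimate. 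Proving that typical connecting clusters are ``directed'' in this sense is precisely the content of the paper, and it is not obtained by general principles: the paper first builds a norm \(\tilde\nu\) from cone-constrained point-to-point connections (where the constraint regions are separated by construction, so subadditivity is clean), then shows the half-space constraint is removable for point-to-half-space connections via a coarse-graining in which backtracking clusters are forced to have strictly more coarse-grained steps (this argument needs a macroscopic buffer between source and constraint hyperplane, which is exactly what the point-to-point geometry lacks --- the paper explicitly notes the point-to-point analogue ``seems to be much harder''), relates \(\tilde\nu\) to \(\nu_H\) by convex duality, and finally transfers back to unconstrained point-to-point connections by a coarse-graining whose cell is the Wulff-type ball \(L\calU_{\tilde\nu}\). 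Your sentence ``is unobstructed by general principles'' is where essentially all of the paper's work lives, so as written the proposal does not constitute a proof.
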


\begin{remark}
	\label{rem:ratio_weak_mix}
	The ratio weak mixing condition demanded can look less natural and more stringent than the weak mixing property (not ratio). However, it has been shown, see~\cite[Theorem 3.3]{Alexander-1998}, that in many cases the two are equivalent. In particular, if \(P(\omega)\propto \prod_{C\in\clusterSet(\omega)} f(C)\) (formally, the R.H.S. being infinite, \(\clusterSet\) denotes the set of connected components), the assumption \(P(0\leftrightarrow \Lambda_N^c\given \calF_{E\setminus E(\Lambda_N)})\leq e^{-cN}\) implies that the model has exponentially bounded controlling regions in the sense of~\cite{Alexander-1998}.
	
	Moreover, modulo straightforward changes in the proofs, one can replace the exponential mixing by any power law mixing with power \(>d\). But this type of mixing can generally be enhanced to exponential (see for example the discussion on mixing in~\cite{Martinelli-1999}).
\end{remark}
\begin{remark}
	\label{rem:ins_tol}
	The insertion tolerance property excludes degeneracies occurring in hard-core models. Moreover, it gives lower bounds on local connections implying for example that the decay rates of Definition~\ref{def:ICL} are in \((\epsilon,\epsilon^{-1})\) for some \(\epsilon>0\) (non-degenerate).
\end{remark}
\begin{remark}
	\label{rem:sym}
	\(\nu\) obviously inherit additional symmetries of \(P\).
\end{remark}

\subsection{Strategy of the proof}

The idea of the proof goes as follows: one expects that existence of \(\nu\) and \(\nu\) being a norm is closely related to some form of sub-additivity. The latter property can be recovered from mixing \emph{if typical clusters realizing connections are somehow directed}. We thus introduce various notions of ``directed connections'' for which we prove existence of an asymptotic decay rate. We then show that all these rates are equal and define a norm \(\tilde{\nu}\). To relate the obtained ``directed rate'' to the ``real rates'', we do a small detour: we introduce point to hyperplanes decay rates and their directed version. Showing that these two agree is much easier than for point-to-point connections and is done using a suitable coarse-graining argument. We then relate directed point-to-point to directed point-to-hyperplane via convex duality (approximately: directed point-to-hyperplane connections in a direction \(s\) are realized by a directed point-to-point connection in an optimal direction \(s'\)). Finally, we relate (non directed) point-to-point connections to (non directed) point-to-hyperplane connections via another coarse-graining argument.

\section{Definitions and notations}
\label{sec:def_notation}
\subsection{General notations}
Denote \(\norm{\ }\) the Euclidean norm on \(\Rd\) and \(\rmd\) the associated distance. Write \(\bbS^{d-1}\) the unit sphere for \(\norm{\ }\). \(\lrangle{\ ,\ }\) will denote the scalar product. \(s\) will always be an element of \(\bbS^{d-1}\). For a (possibly asymmetric) norm \(\mu:\Rd\to\R_+\), define the unite ball of \(\mu\) and its polar set (``Wulff shape'')
\begin{equation*}
	\calU_{\mu} = \{x\in\Rd:\ \mu(x)\leq 1 \},\quad \calW_{\mu} = \bigcap_{s\in\bbS^{d-1}} \{x\in\Rd:\ \lrangle{x,s}\leq \mu(s) \}.
\end{equation*}
For \(A\subset \Rd\) and \(x\in \Rd\), write \(A+x\) the translate of \(A\) by \(x\), \(\partial A\) the boundary of \(A\) and \(\mathring{A}=A\setminus \partial A\) the interior of \(A\).

Denote
\begin{equation*}
	\Lambda_N = [-N,N]^d,\quad \Lambda_N(x) = x+\Lambda_N.
\end{equation*}We also denote \(\Lambda_N\) the intersection of \(\Lambda_N\) with \(\Zd\).

Define the half spaces: for \(s\in\bbS^{d-1}\),
\begin{equation}
\label{eq:def:half_space}
	H_{s} = \{x\in\Rd:\ \lrangle{x,s}\geq 0\},\quad H_{s}(x) = x+H_s.
\end{equation}
Then, for \(\delta\in [0,1]\), define the cones
\begin{equation}
\label{eq:def:cones}
\cone_{s,\delta} = \{x\in\Rd:\ \lrangle{x,s} \geq (1-\delta) \norm{x} \},\quad \cone_{s,\delta}(x) = x+\cone_{s,\delta}.
\end{equation}
\(\delta=0\) is a line and \(\delta =1 \) is the half space \(H_s\).

Also introduce the truncated cones
\begin{equation}
	\label{eq:def:trunc_cones}
	\cone_{s,\delta}^K = \cone_{s,\delta} \setminus H_{s}(Ks),\quad \cone_{s,\delta}^K(x)= x+\cone_{s,\delta}^K.
\end{equation}

For \(x\in\Rd\), we denote \(\mathrm{int}(x)\) the point in \(\Zd\) closest to \(x\), with some fixed breaking of draws respecting symmetries/translations of \(\Zd\). We will often omit \(\mathrm{int}\) from the notation.

We fix a priori some arbitrary total order on \(\Zd\).

We will regularly use the following notation: for \((a_n)_{n\geq 1}\in \bbR^{\bbN}\) a sequence, we denote \(\overline{a} = \limsup_{n\to\infty} a_n\in\bbR\cup \{\pm \infty\}\), \(\underline{a}=\liminf_{n\to\infty} a_n\in\bbR\cup \{\pm \infty\}\). When \(\overline{a}=\underline{a}\), we write the limit \(a\).

A quantity \(f(n)\) is \(o_n(1)\) if \(\lim_{n\to\infty} f(n) = 0\).

\subsection{Percolation}
We consider edge percolation models, in all this work \(E\) will be a subset of \(\big\{\{i,j\}\subset \Zd \big\}\) with the properties:
\begin{itemize}
	\item \emph{Irreducibility}: \((\Zd,E)\) is connected.
	\item \emph{Finite Range}: there exists \(r>0\) such that \(\norm{i-j}\geq r \implies \{i,j\}\notin E\). The smallest such \(r\) is called the \emph{range} of \(E\) and is denoted \(R\equiv R(E)\).
	\item \emph{Translation Invariance}: for any \(e\in E\) and \(x\in\Zd\), \(x+e\in E\).
\end{itemize}

The graph distance on \((\Zd,E)\) is denoted \(\rmd_E\). As \(E\) is finite range and irreducible, there exists \(\cstEquivDist >0\) such that
\begin{equation*}
	\cstEquivDist^{-1} \rmd(x,y)\leq \rmd_E(x,y)\leq \cstEquivDist \rmd(x,y).
\end{equation*}

For a set \(A\subset \Zd\), denote \(A^c=\Zd\setminus A\), \(\partial^{\interior}A = \{x\in A:\ \exists y\in A^c,\ \{x,y\}\in E  \}\), \(\partial^{\exterior}A = \{x\in A^c:\ \exists y\in A,\ \{x,y\}\in E  \}\). Also define \(E(A) =\{\{x,y\}\in E:\ x,y\in A\}\).

For \(\omega\in\{0,1\}^E\), we systematically identify the \(\{0,1\}\)-valued function and the edge set induced by the set \(\{e:\omega_e=1\}\), the set of open edges. When talking about connectivity properties of \(\omega\), it is assumed that the graph \((\Zd,\omega)\) is considered.

For \(F\subset E\) finite, denote \(\calF_{F} = \{A\subset \{0,1\}^F \} \) and for \(F\subset E\) infinite, denote \(\calF_F\) the sigma algebra generated by the collection \((\calF_{F'})_{F'\subset F \textnormal{ finite}}\). A percolation measure \(P\) is a probability measure such that \((P,\calF_E, E)\) is a probability space. We write \(\{x\leftrightarrow y\}\) for the event that \(x,y\) lie in the same connected component (and \(\{A\leftrightarrow B\}\) for the event that there exists \(x\in A, y\in B\) with \(x\leftrightarrow y\)). We also will write \(\{x\xleftrightarrow[]{F} y\}\) for the event that \(x\) is connected to \(y\) by a path of open edges in \(F\). \(\omega\) will be a random variable with law \(P\).

\subsection{Hypotheses}
One of our hypotheses is a mixing condition, called the \emph{exponential ratio weak mixing property} for connections events:
\begin{definition}[Ratio mixing]
	\label{def:ratio_mix}
	We say that \(P\) has the \emph{ratio weak mixing property} with rate \(c>0\) and constant \(C\geq 0\) if for any sets \(F,F'\subset E\) and events \(A\in\calF_F,B\in\calF_{F'}\) with \(P(A)P(B)>0\),
	\begin{equation}
	\label{eq:def:ratio_mix}
		\Big|1- \frac{P(A\cap B)}{P(A) P(B)}\Big| \leq C\sum_{e\in F, e'\in F'} e^{-c \rmd(e,e')},
	\end{equation}where \(\rmd\) is the Euclidean distance. We say that the property is satisfied for the class \(\calC\subset \calF_E\) if~\eqref{eq:def:ratio_mix} holds whenever, in addition to the hypotheses, \(A,B\in \calC\).
\end{definition}

\begin{definition}[Connexion events]
	The class of \emph{local connection events} is the set of events of the form
	\begin{equation*}
		\{A\xleftrightarrow[]{\Delta} B\},
	\end{equation*}where \(A,B\subset \Zd\), \(\Delta\subset E\) are finite.
\end{definition}

\begin{definition}[Insertion tolerance]
	\label{def:insertion_tolerance}
	A percolation measure \(P\) on \(E\) is said to have the \emph{insertion tolerance} property if for any edge \(e\in E\) there exists \(\theta_e>0\) such that
	\begin{equation*}
		P(\omega_e =1 \given \calF_{E\setminus\{e\}}) \geq \theta_e.
	\end{equation*}
	If \(P\) is finite range and translation invariant, it is equivalent to the existence of \(\theta>0\) such that
	\begin{equation*}
		\min_{e\in E} P(\omega_e =1 \given \calF_{E\setminus \{e\}}) \geq \theta.
	\end{equation*}
\end{definition}

A useful consequence of insertion tolerance is
\begin{lemma}
	\label{lem:insertion_tol_consequence}
	Suppose \(P\) is a finite range, translation invariant percolation measure on \(E\). Then, for any \(x,y\in \Zd\), and any sets \(A,B\subset \Zd\),
	\begin{equation*}
		P(x\leftrightarrow A, y\leftrightarrow B, x\leftrightarrow y)\geq \theta^{\rmd_{E}(x,y)} P(x\leftrightarrow A, y\leftrightarrow B),
	\end{equation*}where \(\rmd_E\) is the graph distance on \((\Zd,E)\) and \(\theta>0\) is given by Definition~\ref{def:insertion_tolerance}.
\end{lemma}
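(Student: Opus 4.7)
The obvious strategy is to force $x$ and $y$ to be connected by opening every edge along a fixed shortest $(\Zd,E)$-path between them, and to pay for this forcing using insertion tolerance edge by edge. The length of such a path is precisely $\rmd_E(x,y)$, which is where the exponent in the statement comes from.

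More concretely, I would fix a shortest path $(x=z_0,\dots,z_k=y)$ with $k=\rmd_E(x,y)$, set $F=\{\{z_{i-1},z_i\}:1\le i\le k\}\subset E$, and observe that on $\{\omega_e=1\ \forall e\in F\}$ one automatically has $x\leftrightarrow y$. Writing $\mathcal{E}=\{x\leftrightarrow A,\,y\leftrightarrow B\}$, it therefore suffices to prove $P(\mathcal{E}\cap\{\omega_e=1\ \forall e\in F\})\geq \theta^k P(\mathcal{E})$. An iterated use of Definition~\ref{def:insertion_tolerance} via the tower property --- exploiting that every cylinder $\{\omega_{e_1}=\cdots=\omega_{e_{j-1}}=1\}$ is measurable with respect to $\calF_{E\setminus\{e_j\}}$ --- readily gives
\[P\bigl(\omega_e=1\ \forall e\in F\bgiven \calF_{E\setminus F}\bigr)\ \geq\ \theta^k\qquad\text{almost surely.}\]

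The only subtle point, and what I expect to be the main pitfall, is that $\mathcal{E}$ itself depends on the status of edges in $F$, so it is not $\calF_{E\setminus F}$-measurable and one cannot directly multiply it by the conditional probability above. The remedy is monotonicity: since $\{x\leftrightarrow A\}$ and $\{y\leftrightarrow B\}$ are increasing events, replacing $\omega$ by $\omega\vee\mathbf{1}_F$ produces an event $\mathcal{E}^F\supset\mathcal{E}$ that \emph{is} $\calF_{E\setminus F}$-measurable and coincides with $\mathcal{E}$ on $\{\omega_e=1\ \forall e\in F\}$. Conditioning on $\calF_{E\setminus F}$ against $\mathbf{1}_{\mathcal{E}^F}$ then gives
\[P(\mathcal{E}\cap\{\omega_e=1\ \forall e\in F\})=\e\bigl[\mathbf{1}_{\mathcal{E}^F}\,P(\omega_e=1\ \forall e\in F\bgiven\calF_{E\setminus F})\bigr]\geq \theta^k P(\mathcal{E}^F)\geq \theta^k P(\mathcal{E}),\]
which combined with the inclusion $\mathcal{E}\cap\{\omega_e=1\ \forall e\in F\}\subset\{x\leftrightarrow A,y\leftrightarrow B,x\leftrightarrow y\}$ finishes the proof.
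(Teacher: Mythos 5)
Your proposal is correct and is essentially the paper's own argument: the paper also opens a geodesic $\gamma$ from $x$ to $y$, conditions on $\calF_{E\setminus\gamma}$, replaces the connection events by their $\calF_{E\setminus\gamma}$-measurable versions evaluated at $\omega\cup\gamma$ (your $\mathcal{E}^F$), and concludes by iterated insertion tolerance together with monotonicity. Your write-up just spells out the measurability and tower-property details that the paper leaves implicit.
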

\begin{proof}
	Let \(\gamma\) be a path (seen as set of edges) from \(x\) to \(y\) realizing \(\rmd_{E}(x,y)\) (in particular, \(|\gamma| = \rmd_{E}(x,y)\)). Now,
	\begin{align*}
		P(x\leftrightarrow A, y\leftrightarrow B, x\leftrightarrow y) &\geq P\big(P(x\leftrightarrow A, y\leftrightarrow B, \gamma\subset \omega\given \calF_{E\setminus \gamma}) \big)\\
		&= P\big(\mathds{1}_{\omega\cup \gamma \in \{x\leftrightarrow A\} }\mathds{1}_{\omega\cup \gamma \in \{y\leftrightarrow B\}}  P(\gamma\subset \omega\given \calF_{E\setminus \gamma}) \big)\\
		&\geq P\big(\mathds{1}_{\omega \in \{x\leftrightarrow A\} }\mathds{1}_{\omega \in \{y\leftrightarrow B\}} \big)\theta^{|\gamma|}.
	\end{align*}
\end{proof}

We will regularly use this kind of argument without explicitly writing down the details.

\section{Coarser lattice, restricted connections, preliminary results}

In all this Section, we work under the hypotheses of Theorem~\ref{thm:existence}.

\subsection{Coarse connections}
To avoid dealing with trivialities occurring from the discrete structure of \(\Zd\), we will look at a coarser notion of connections. Let \(R_0\geq R\) (recall \(R\) is the range) be a fixed integer such that \(\big(\Lambda_{R_0}, E(\Lambda_{R_0})\big)\) is connected. Denote \(\Gamma = ((2R_0+1)\bbZ)^{d}\) the coarser lattice. For \(x\in \Gamma\), denote \(\Lambda(x)= x+\Lambda_{R_0}\). To lighten notations, for \(x,y\in\Gamma\) we will write \(\{x\leftrightarrow y\}\) for the event \(\{\Lambda(x)\leftrightarrow \Lambda(y)\}\). By Lemma~\ref{lem:insertion_tol_consequence}, these events have the same asymptotic decay rates as the point-to-point rates.

For a point \(x\in\Rd\), denote \(B_x = v_x+[-R_0-1/2,R_0+1/2)^d\) the box such that \(v_x\in\Gamma\), \(x\in B_x\). For a set \(\Delta\subset \R^d\), we denote \([\Delta] = \bigcup_{x\in \Delta} B_x \cap\Zd\). For \(x,y\in\R^d, \Delta\subset \Rd\), we write \(\{x\xleftrightarrow[]{\Delta} y\} = \{\Lambda(v_x)\xleftrightarrow[]{E([\Delta])} \Lambda(v_y)\}\).

In the same spirit, for \(\Delta\subset\Rd\), we say that an event is \(\Delta\)-measurable if it is in \(\calF_{E([\Delta])}\).

\subsection{A family of coarse graining}
\label{sec:coarse_graining}

We will regularly use coarse-graining of the cluster of \(0\). We describe here a generic coarse-graining procedure parametrized by the ``unit cell'' of the coarse graining. These procedures are a general formulation of the coarse-graining procedure applied in~\cite{Campanino+Ioffe+Velenik-2008}. Let \(0\in\Delta\subset \Zd\) be finite. Let \(\Delta_K= \bigcup_{x\in \Delta} \Lambda_K(x)\). Let \(\calT = \calT(\Delta,K)\) be the set of embedded rooted trees defined as follows: \(T\in\calT\) is the data of a set of vertices \(t=\{t_0,\cdots,t_m\}\) where each \(t_i\in\Zd\), and a set of edges \(f=\{f_1,\cdots, f_{m}\}\) with \(f_i\subset t, |f_i|=2\) such that
\begin{itemize}
	\item The graph \((t,f)\) is a tree.
	\item A given point in \(\Zd\) can only occur once as element of \(t\).
	\item \(t_0 =0\), \(t_{i}\in \partial^{\exterior} (\Delta_K+ t_j)\) where \(f_i=\{t_i,t_j\}\).
	\item The labels and edges can be inductively reconstructed from the set of vertices (without labels) \(W\) as follows: \(t_i\) is the smallest (for the fixed total order on \(\Zd\)) element of \(W\setminus \{0,t_1,\cdots,t_{i-1}\}\) belonging to \( \bigcup_{j=0}^{i-1} \partial^{\exterior}(t_{j}+\Delta_K) \) and \(f_i\) is given by \(\{t_i,v^*\}\) where \(v^*\) is the smallest element of \(\{t_0,\cdots, t_{i-1}\}\) with \(t_i\in \partial^{\exterior}(v^*+\Delta_K) \).
\end{itemize}

A fairly direct observation is that the degree of a vertex \(t_i\) in \((t,f)\) is less than \(d_{\Delta_K} = |\partial^{\exterior} \Delta_K |\) and one has a natural inclusion of \(\calT_l = \{T\in \calT:\ |t|=l \}\) in the set of sub-trees of \(\bbT_{d_{\Delta_K}}\) (the \(d_{\Delta_K}\)-regular tree) containing \(0\) and having \(l\) vertices. In particular, there exists \(c>0\) universal such that \(|\calT_l|\leq e^{c \log(d_{\Delta_K})l}\).

We now define a mapping \(\CG_{\Delta,K}\) from the set of clusters containing \(0\) to \(\calT(\Delta,K)\). We define it via an algorithm constructing \(T\in\calT\) from \(C\ni 0\) (see Figure~\ref{fig:coarse_graining_square_cell_expl}).
Fix some \(C\ni 0\). Consider the graph formed by the vertices of \(\Zd\) and the edges in \(C\). Construct \(t,f\) as follows

\begin{algorithm}[H]
	\label{alg:CGpercolation}
	Set \(t_0=0\), \(t=\{t_0\}\), \(f=\varnothing\), \(V= \Delta_K\), \(i=1\)\;
	\While{\(A=\big\{ z\in\partial^{\exterior}V:\ z\xleftrightarrow{(z+\Delta)\setminus V }\partial^{\exterior}(z+\Delta)\big\}\neq \varnothing \)}{
		Set \(t_{i}= \min A \)\;
		Let \(v_*\) be the smallest \(v \in t\) such that \(t_m\in \partial^{\exterior}(\Delta + v^*)\)\;
		Set \(f_i=\{v^*,t_i\}\)\;
		Update \(t=t\cup\{t_{i}\}\), \(f=f\cup\{f_i\}\), \(V= V\cup(t_i + \Delta_K)\), \(i=i+1\)\;
	}
	Set \(m=i\)\;
	\Return \((t,f)\)\;
	\caption{Coarse graining of a cluster containing \(0\).}
\end{algorithm} Write \(\CG_{\Delta,K}(C) =(t(C),f(C))\). One has automatically that \(C\) is in a \linebreak\( (K +2\times\mathrm{radius}(\Delta))\)-neighbourhood of \(\CG_{\Delta,K}(C)\).

\begin{figure}[h]
	\centering
	\includegraphics[scale=0.8]{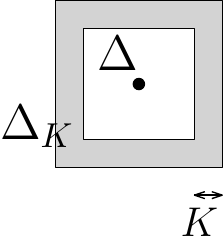}
	\hspace{20mm}
	\includegraphics[scale=0.8]{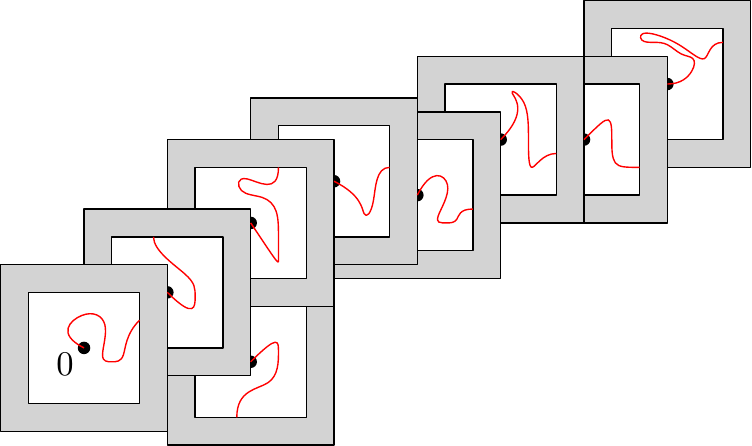}
	\caption{Left: a possible cell \(\Delta\). Right: a coarse graining using the square cell. Required connections are depicted in red.}
	\label{fig:coarse_graining_square_cell_expl}
\end{figure}

The usefulness of such coarse graining is the conjunction of the combinatorial control we mentioned on trees with given number of vertices and the following energy bound.
\begin{lemma}
	\label{lem:tree_energy_bound}
	Suppose the hypotheses of Theorem~\ref{thm:existence} hold. Then, there exists \(K_0\geq 0\) such that for any \(0\in\Delta\subset \Zd\) finite, \(K\geq K_0\),
	and \(T=(t,f)\in\calT(\Delta,K)\),
	\begin{equation*}
	P\big(\CG_{\Delta,K}(C_0) = T\big)\leq \Big(P(0\leftrightarrow \Delta^c)(1+|\Delta|e^{-\rateMix K/2})\Big)^{|f|}.
	\end{equation*}
\end{lemma}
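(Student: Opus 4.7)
The plan is to view $\{\CG_{\Delta,K}(C_0)=T\}$ as a subset of an intersection of one connection event per tree edge, each supported in a cell around a single vertex $t_i$, and to peel off these events one by one using exponential ratio weak mixing.

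\textbf{Step 1 (one event per tree edge).} Writing $V_{i-1}=\bigcup_{j=0}^{i-1}(t_j+\Delta_K)$ for the region already explored before step $i$ of the coarse-graining algorithm, the selection of $t_i$ from the candidate set $A$ forces, for each $i=1,\ldots,|f|$, the event
\begin{equation*}
E_i\;:=\;\bigl\{t_i\xleftrightarrow[]{(t_i+\Delta)\setminus V_{i-1}}\partial^{\exterior}(t_i+\Delta)\bigr\}.
\end{equation*}
Hence $\{\CG_{\Delta,K}(C_0)=T\}\subset E_1\cap\cdots\cap E_{|f|}$. The event $E_i$ is measurable with respect to a set $F_i$ of edges contained in an $O(1)$-neighbourhood of $t_i+\Delta$, with $|F_i|=O(|\Delta|)$, and translation invariance gives $P(E_i)\leq P\bigl(0\xleftrightarrow[]{\Delta}\partial^{\exterior}\Delta\bigr)\leq P(0\leftrightarrow\Delta^c)$.

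\textbf{Step 2 (separation and iterated mixing).} The geometric ingredient is that, since $V_{i-1}\supseteq t_j+\Delta_K$ for all $j<i$ and $\Delta_K$ is the $K$-thickening of $\Delta$, the $F_i$'s are pairwise disjoint and every edge of $F_i$ lies at distance at least $K-O(1)$ from every edge of $F_j$ with $j<i$. Setting $A_i:=E_1\cap\cdots\cap E_i\in\calF_{F_1\cup\cdots\cup F_i}$, Definition~\ref{def:ratio_mix} applied to $A_{i-1}$ and $E_i$ yields
\begin{equation*}
P(A_i)\;\leq\;P(A_{i-1})\,P(E_i)\,\Bigl(1+\cstMix\!\!\sum_{\substack{e\in F_i\\e'\in F_1\cup\cdots\cup F_{i-1}}}\!\!e^{-\rateMix\,\rmd(e,e')}\Bigr).
\end{equation*}
Combining $|F_i|=O(|\Delta|)$ with the volume-growth bound $\sum_{e':\,\rmd(e,e')\geq K}e^{-\rateMix\,\rmd(e,e')}=O(K^{d-1}e^{-\rateMix K})$, the double sum is $O(|\Delta|K^{d-1}e^{-\rateMix K})$, which is at most $|\Delta|e^{-\rateMix K/2}/\cstMix$ as soon as $K\geq K_0$, for $K_0$ chosen so that the polynomial factor and all $d$-dependent multiplicative constants are absorbed by half the exponential. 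Iterating over $i=1,\ldots,|f|$ then gives the claimed bound.

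\textbf{Main obstacle.} The delicate point is to use the ``not-yet-explored'' restriction $(t_i+\Delta)\setminus V_{i-1}$ rather than all of $t_i+\Delta$ in the definition of $E_i$: only this restriction makes the supports $F_i$ genuinely disjoint and uniformly $K$-separated, and this is exactly what the $K$-thickening $\Delta_K$ in the algorithm is engineered to enable. A secondary point is that $A_{i-1}$ is a finite intersection of local connection events rather than a single one; since it is measurable with respect to the edge set $F_1\cup\cdots\cup F_{i-1}$, which is disjoint from (and $K$-separated from) the edge set $F_i$ supporting $E_i$, the ratio weak mixing inequality applies essentially verbatim via its natural extension from the generating class of local connection events to their finite-intersection closure.
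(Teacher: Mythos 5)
Your proof follows the paper's argument essentially verbatim: the same decomposition of \(\{\CG_{\Delta,K}(C_0)=T\}\) into one connection event per tree edge with supports of size \(O(|\Delta|)\) separated by distance of order \(K\), the same single-step application of ratio weak mixing to peel off the last event, and the same absorption of the polynomial prefactor \(CK^{d-1}\) into \(e^{-\rateMix K/2}\) for \(K\geq K_0\), followed by iteration. The one subtlety you flag --- that the already-accumulated event is a finite intersection of local connection events rather than a single one, so the mixing hypothesis for the class of local connection events is being invoked slightly beyond its literal statement --- is present and handled identically (i.e., silently) in the paper's own proof, so it is not a gap relative to the reference argument.
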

\begin{proof}
	Let \(T=(t,f)\). The event \(\CG_{\Delta,K}(C_0) = T\) implies in particular that
	\begin{equation*}
	\bigcap_{i=1}^{|f|} \{t_i\xleftrightarrow[]{(t_i+\Delta)\setminus V_i} \partial^{\exterior}(t_i+\Delta)\} \equiv \bigcap_{i=1}^{|f|} A_i
	\end{equation*}occurs, where \(V_i = \bigcup_{0\leq j<i} (t_j+\Delta_K)\). Now, let \(F_i\) denote the support of \(A_i\). One has that \(|F_i|\leq C|\Delta|\) for any \(i\) (recall \(P\) has finite range) and \(\rmd(F_i,F_j)\geq K\). In particular, by~\eqref{eq:def:ratio_mix},
	\begin{align*}
	P(\bigcap_{i=1}^{|f|} A_i) &\leq P(\bigcap_{i=1}^{|f|-1} A_i)P(A_{|f|})\Big(1+\cstMix\sum_{e\in F_{|f|}, e': \rmd(e,e')\geq K} e^{-\rateMix\rmd(e,e')}\Big)\\
	&\leq P(\bigcap_{i=1}^{|f|-1} A_i)P(A_{|f|})\Big(1+C|\Delta| K^{d-1}e^{-\rateMix K}\Big)\\
	&\leq P(\bigcap_{i=1}^{|f|-1} A_i)P(0\leftrightarrow \Delta^c) \Big(1+C|\Delta| K^{d-1}e^{-\rateMix K}\Big)
	\end{align*}where we used inclusion of events and translation invariance in the last line. Iterating \(|f|\) times gives the result.
\end{proof}

\section{Proofs}

The proof will go by introducing a family of decay rates (rates associated to various connection events). The idea is to prove the wanted properties for convenient rates and then to prove that all rates are in fact the same. Again, we work under the hypotheses of Theorem~\ref{thm:existence} which are implicitly assumed in the statements.

\subsection{Constraint point-to-point}

First introduce a family of connection events. For \(\delta\in(0,1]\) and \(s,s'\in\bbS^{d-1}\) such that \(s\in\mathring{\cone}_{s',\delta}\),
\begin{equation*}
	Q_{s',\delta}(s,N) = \{ 0\xleftrightarrow[]{\cone_{s',\delta}\setminus H_{s'}(Ns) } Ns\}.
\end{equation*}

\begin{lemma}
	\label{lem:constraint_pt2pt_exist}
	For any \(\delta\in(0,1]\) and \(s,s'\in\bbS^{d-1}\) such that \(s\in\mathring{\cone}_{s',\delta}\), the limit
	\begin{equation*}
		\tilde{\nu}_{s',\delta}(s) = \lim_{N\to\infty} -\frac{1}{N} \log P(Q_{s',\delta}(s,N))
	\end{equation*}exists.
\end{lemma}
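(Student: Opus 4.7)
The plan is to prove an approximate super-multiplicativity of $p_N := P(Q_{s',\delta}(s, N))$, namely
\[
p_{N+M} \geq (N+M)^{-C}\, p_{N-L}\, p_M
\]
for some constant $C$ and with $L = L(N+M) = C_0 \log(N+M)$. Setting $a_N = -\log p_N$, this gives a quasi-subadditivity $a_{N+M} \leq a_{N-L} + a_M + O(\log(N+M))$, and a Fekete-type iteration along dyadic subsequences (with the residual shift $a_{N-L}$ vs.\ $a_N$ absorbed by a further application of insertion tolerance, which gives $a_N \leq a_{N-L} + O(L)$ by extending a connection) yields existence of the limit $\tilde{\nu}_{s',\delta}(s)$ because the $O(\log(N+M))$ corrections are sublinear.

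The geometric input driving the super-multiplicativity is the additive structure of the cone: from $s\in\mathring{\cone}_{s',\delta}$ we get $Ns\in\cone_{s',\delta}$ and $Ns+\cone_{s',\delta}\subset\cone_{s',\delta}$, so that
\[
R_1 = \cone_{s',\delta}\setminus H_{s'}(Ns),\qquad R_2 = \cone_{s',\delta}(Ns)\setminus H_{s'}((N+M)s)
\]
are disjoint in $\Rd$, separated by the hyperplane $\{x:\lrangle{x,s'}=N\lrangle{s,s'}\}$. Concatenating a connection realizing $Q_{s',\delta}(s,N)$ (supported in $R_1$) with a translate of one realizing $Q_{s',\delta}(s,M)$ (supported in $R_2$, apex at $Ns$) produces a connection realizing $Q_{s',\delta}(s,N+M)$.

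The challenge is to decorrelate the two events via the ratio weak mixing of Definition~\ref{def:ratio_mix}. A naive application fails because the coarse-graining $[\cdot]$ can make $E([R_1])$ and $E([R_2])$ share cells along the interface, whose cross-section inside the cone is of order $(N+M)^{d-1}$; this produces a useless error estimate. I bypass this by introducing a buffer of width $L = C_0 \log(N+M)$, replacing the first event by $A_1 = Q_{s',\delta}(s, N-L)$: its support now lies inside $[\cone_{s',\delta}\setminus H_{s'}((N-L)s)]$, separated from the support of $A_2$ (the translate of $Q_{s',\delta}(s, M)$ with apex $Ns$) by at least $L\lrangle{s,s'} - O(R_0)$. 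For $C_0$ large enough, ratio weak mixing then gives
\[
P(A_1\cap A_2) \geq \Bigl(1 - C_1(N+M)^{d-1} e^{-\rateMix L/2}\Bigr) P(A_1) P(A_2) \geq \tfrac{1}{2} P(A_1) P(A_2).
\]
A deterministic straight-line bridge from $(N-L)s$ to $Ns$ of length $O(L)$, lying in $\cone_{s',\delta}$ at heights in $[(N-L)\lrangle{s,s'},\, N\lrangle{s,s'}]$, is then forced open using insertion tolerance in the manner of Lemma~\ref{lem:insertion_tol_consequence}, at multiplicative cost $\theta^{O(L)} = (N+M)^{-O(1)}$. On the joint event, $Q_{s',\delta}(s, N+M)$ occurs, and the target super-multiplicativity follows.

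The main technical hurdle is the tension between the polynomial-in-$(N+M)$ cross-sectional scaling of the cone supports and the edgewise form of the ratio weak mixing estimate; the logarithmic buffer $L = C_0\log(N+M)$ is precisely the device that reconciles both at sub-exponential cost, and the directional structure of $Q_{s',\delta}$ (the cluster is trapped in the cone) is essential to making the two events live on disjoint regions at all.
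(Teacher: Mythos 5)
Your overall architecture --- separate the two cone events by a buffer of logarithmic width, decorrelate them with ratio weak mixing, reconnect with an insertion-tolerance bridge along the segment, and feed the resulting quasi-subadditivity into a relaxed Fekete argument --- is exactly the paper's strategy, and for \(\delta<1\) your estimates are correct (the truncated cone \(\cone_{s',\delta}\setminus H_{s'}(Ns)\) is bounded with \(O(N^d)\) edges, so the mixing error is indeed \(\mathrm{poly}(N+M)\,e^{-\rateMix L}\) and a large \(C_0\) kills it).

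The genuine gap is the case \(\delta=1\), which the lemma explicitly includes and which is used downstream (e.g.\ \(Q_{s,1}\) in the proof that \(\tilde{\nu}\) is a norm, and the half-space events in the duality lemma). For \(\delta=1\) the ``cone'' is the half-space \(H_{s'}\), so \(R_1=H_{s'}\setminus H_{s'}((N-L)s)\) and the translate supporting \(A_2\) are both \emph{infinite slabs}. Your claimed cross-section bound \(C_1(N+M)^{d-1}\) is then false, and worse, the ratio weak mixing estimate~\eqref{eq:def:ratio_mix} is vacuous: the sum \(\sum_{e\in F,e'\in F'}e^{-\rateMix\rmd(e,e')}\) diverges, since for each fixed separation \(D\geq L\) there are infinitely many edges of \(F\) at distance \(D\) from \(F'\). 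The directional trapping you invoke makes the regions disjoint but not finite. The paper's proof contains a preliminary step precisely to handle this: using the hypothesis \(P(0\leftrightarrow\Lambda_n^c)\leq e^{-\rateCo n}\), one replaces \(Q_{s',\delta}(s,N)\) by the connection restricted to \((\cone_{s',\delta}\setminus H_{s'}(Ns))\cap\Lambda_{\alpha N}\) with \(\alpha\) chosen so that the discarded contribution is of strictly smaller exponential order; the supports then become finite and polynomial in the block length, and the argument proceeds. Without some such truncation your mixing step fails outright for \(\delta=1\).

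A secondary point: you take the buffer \(L=C_0\log(N+M)\), depending on the \emph{total} length. In the classical Fekete iteration with a fixed near-optimal block \(n_k\), the per-step cost \(O(\log N)\) accumulates to \(O(N\log N/n_k)\), and \(\log N/n_k\) does not vanish as \(N\to\infty\) for fixed \(n_k\); so the standard iteration breaks. Your dyadic doubling \(a_{2N}\leq 2a_N+O(\log N)\) does repair this along powers of two, but to get the limit over all \(N\) you still need the binary-decomposition step (total overhead \(O(\log^2 N)=o(N)\)), which you should spell out --- or, more simply, do as the paper does and let the buffer depend only on the smaller block (e.g.\ \(\log(\min(N,M))^2\)), bounding the mixing sum by the size of the \emph{smaller} support times \(\sum_{r\geq\ell}r^{d-1}e^{-\rateMix r}\), which is uniform in the size of the larger one. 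That form plugs directly into Lemma~\ref{app:quasi_sub_add}.
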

\begin{proof}
	Fix \(s,s'\in\bbS^{d-1}\), \(\delta\in(0,1]\) such that \(s\in\mathring{\cone}_{s',\delta}\). By assumption, \(P(0\leftrightarrow \Lambda_n^c)\leq e^{-\rateCo n}\). Denote \(l=2\limsup_{N\to\infty}-\frac{1}{N} \log P(Q_{s',\delta}(s,N)) \) and set \(\alpha= \frac{l}{\rateCo}\). In particular, there exists \(N_0\) such that for any \(N\geq N_0\), \( P(Q_{s',\delta}(s,N) ) \geq e^{-lN}\). So, \(-\frac{1}{N} \log P(Q_{s',\delta}(s,N))\) has the same upper and lower limits as the sequence
	\begin{equation*}
		\frac{a_N}{N} = -\frac{1}{N}\log P\big(0\xleftrightarrow[]{(\cone_{s',\delta}\setminus H_{s'}(Ns))\cap\Lambda_{\alpha N} } Ns\big).
	\end{equation*} See Figure~\ref{fig:pt_to_pt_constraint_exists_intersect_Lambda} for the volume in which the connection is required to occur.
	\begin{figure}[h]
		\centering
		\includegraphics[scale=0.8]{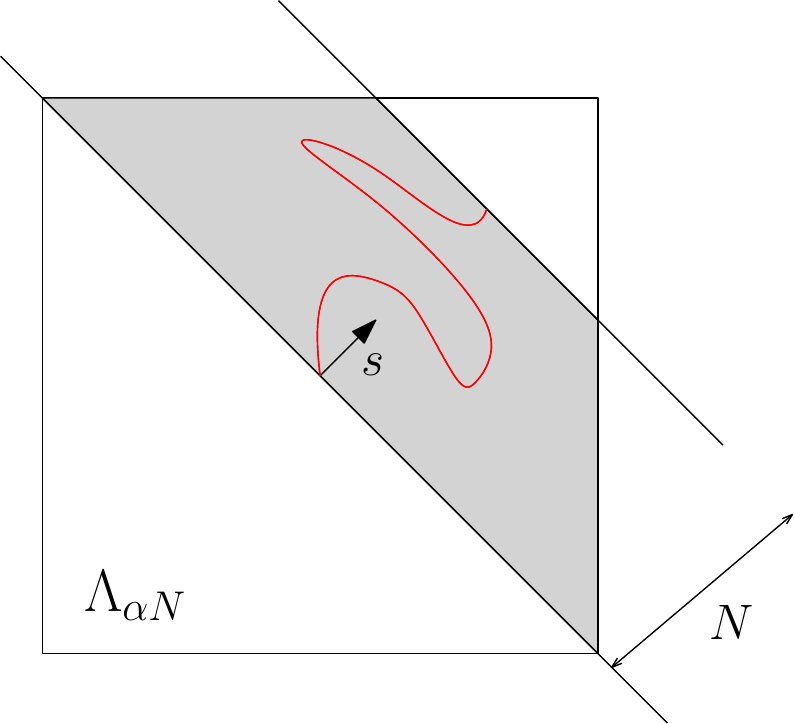}
		\caption{The volume \((\cone_{s,1}\setminus H_{s}(Ns))\cap\Lambda_{\alpha N}\).}
		\label{fig:pt_to_pt_constraint_exists_intersect_Lambda}
	\end{figure}
	This additional manipulation is only needed to handle \(\delta=1\), see Figure~\ref{fig:pt_to_pt_constraint_exists}. We show that \(a_N\) satisfies the hypotheses of Lemma~\ref{app:quasi_sub_add}. Let \(\Delta_N= (\cone_{s',\delta}\setminus H_{s'}(Ns))\cap\Lambda_{\alpha N}\). Let \(n\geq m\) be large enough, \(\ell= \log(m)^2\), and set \(N= n+m+\ell\). Then, \(\Delta_n\subset\Delta_{N}\), \(\big((n+\ell)s+\Delta_m\big)\subset\Delta_{N}\), and \(\rmd(\Delta_n,(n+\ell)s+\Delta_m)\geq \ell\). Then,
	\begin{equation*}
		P\big(0\xleftrightarrow[]{\Delta_{N} } Ns\big)\geq \theta^{\cstEquivDist \ell }P\big(0\xleftrightarrow[]{\Delta_{n} } ns, (n+\ell)s\xleftrightarrow[]{(n+\ell)s+\Delta_m } Ns\big).
	\end{equation*}by inclusion of events and Lemma~\ref{lem:insertion_tol_consequence}. Then, ratio mixing implies
	\begin{equation*}
		P\big(0\xleftrightarrow[]{\Delta_{n} } ns, (n+\ell)s\xleftrightarrow[]{(n+\ell)s+\Delta_m } Ns\big) \geq (1-|\Delta_m|e^{-\rateMix\ell/2}) P\big(0\xleftrightarrow[]{\Delta_{n} } ns\big)P\big(0\xleftrightarrow[]{\Delta_m } ms\big)
	\end{equation*} for any \(m\) large enough. \(|\Delta_m|\) being upper bounded by a degree \(d\) polynomial in \(m\), the wanted property follows with \(g(m) = \log(m)^2\), and \(f(m) = 2 + \cstEquivDist\log(\theta^{-1}) \log(m)^2 \).
	\begin{figure}[h]
		\centering
		\includegraphics[scale=0.65]{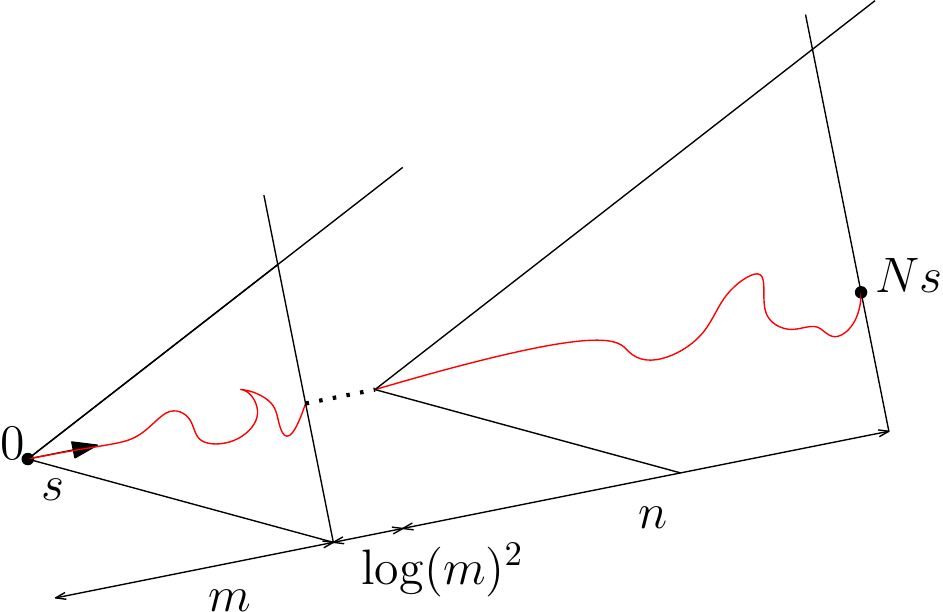}
		\hspace{5mm}
		\includegraphics[scale=0.65]{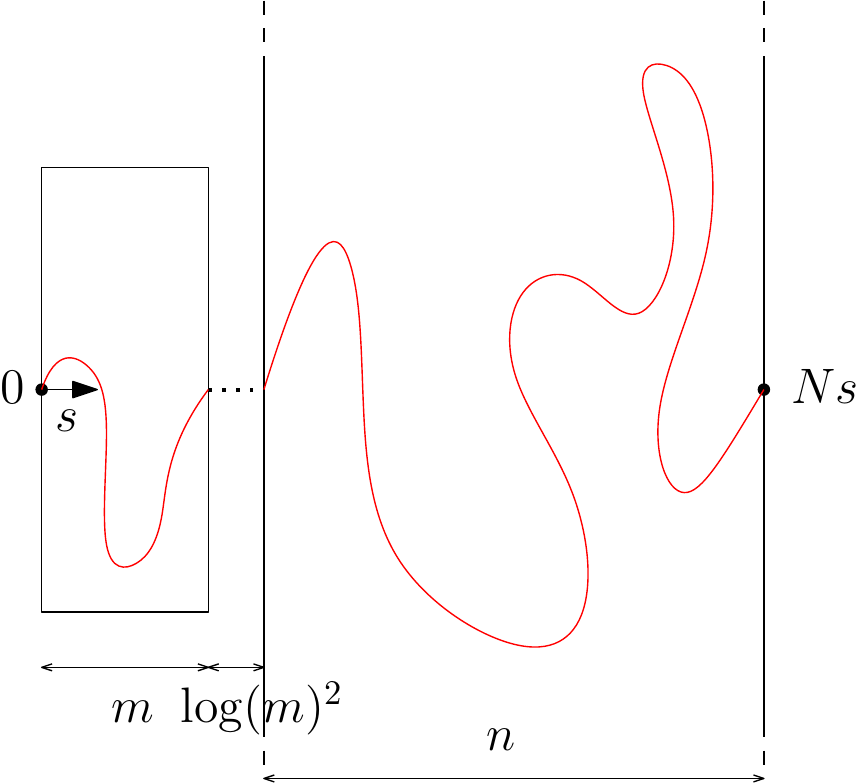}
		\caption{Left: construction of the local event when \(\delta<1\). Right: construction of the local event when \(\delta=1\). Dotted lines denote the use of insertion tolerance.}
		\label{fig:pt_to_pt_constraint_exists}
	\end{figure}
\end{proof}

\begin{lemma}
	\label{lem:constraint_pt2pt_s_delta_indep}
	For any \(s\in\bbS^{d-1}\), \(\tilde{\nu}_{s',\delta}(s)\) does not depend on \(\delta\in(0,1]\) and \(s'\in\bbS^{d-1}\) as long as \(s\in\mathring{\cone}_{s',\delta}\).
\end{lemma}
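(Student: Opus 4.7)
The plan is to establish the lemma by combining monotonicity with a concatenation argument, treating separately the opening parameter $\delta$ (for fixed axis $s'=s$) and the axis $s'$. Monotonicity is immediate: whenever $\cone_{s_1',\delta_1}\subset\cone_{s_2',\delta_2}$ and both contain $s$ in their interior, the inclusion of events $Q_{s_1',\delta_1}(s,N)\subset Q_{s_2',\delta_2}(s,N)$ gives $\tilde{\nu}_{s_1',\delta_1}(s)\geq\tilde{\nu}_{s_2',\delta_2}(s)$, so only the reverse inequalities require work.

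The main step is $\delta$-independence with fixed axis $s$: for $0<\delta_0<\delta_1<1$, I would realize $Q_{s,\delta_0}(s,N)$ by concatenating $k$ shorter events. Partition $[0,Ns]$ into $k$ consecutive segments of length $M\approx N/k$ along $s$, separated by gaps of length $\ell=(\log M)^2$; on the first $j_\star\approx\lceil\tan\theta_1/\tan\theta_0\rceil$ segments (with $\tan\theta_i=\sqrt{2\delta_i-\delta_i^2}/(1-\delta_i)$) impose the shifted narrow-cone event $Q_{s,\delta_0}(s,M)$, and on the remaining segments impose the wider event $Q_{s,\delta_1}(s,M)$. A direct geometric check shows that a cluster point in the $j$-th shifted wide cone has perpendicular deviation from $\bbR s$ at most $M\tan\theta_1$ while its $s$-coordinate is at least $(j-1)(M+\ell)$, so for $j\geq j_\star$ the inequality $M\tan\theta_1\leq(j-1)(M+\ell)\tan\theta_0$ holds and the concatenated cluster fits in $\cone_{s,\delta_0}\setminus H_s(Ns)$. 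Ratio weak mixing factorizes the joint probability up to $(1+o(1))^k$ (the choice $\ell=(\log M)^2$ makes the mixing correction super-polynomially small per gap), and insertion tolerance (Lemma~\ref{lem:insertion_tol_consequence}) fills the gaps at cost $\theta^{\cstEquivDist\ell(k-1)}$. Applying Lemma~\ref{lem:constraint_pt2pt_exist} to each segment and taking $-\frac{1}{N}\log$ yields, as $N\to\infty$, $\tilde{\nu}_{s,\delta_0}(s)\leq\frac{j_\star}{k}\tilde{\nu}_{s,\delta_0}(s)+\frac{k-j_\star}{k}\tilde{\nu}_{s,\delta_1}(s)$, which for $k>j_\star$ rearranges to $\tilde{\nu}_{s,\delta_0}(s)\leq\tilde{\nu}_{s,\delta_1}(s)$.

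The main obstacle is the boundary case $\delta_1=1$, where $\cone_{s,1}=H_s$ has no transverse constraint and the geometric fit fails ($\tan\theta_1=\infty$). I would handle it by bootstrap: the previous step already gives a common value $\tilde{\nu}(s):=\tilde{\nu}_{s,\delta}(s)$ for all $\delta\in(0,1)$, and monotonicity gives $\tilde{\nu}_{s,1}(s)\leq\tilde{\nu}(s)$. For the reverse, I would apply the coarse-graining of Lemma~\ref{lem:tree_energy_bound} to clusters realizing $Q_{s,1}(s,N)$: a cluster visiting a point outside $\cone_{s,1-\varepsilon}(0)$ forces a coarse-graining tree of at least $(1+c(\varepsilon))N/(K+R_1)$ edges, rather than $\sim N/(K+R_1)$ for a cluster staying in $\cone_{s,1-\varepsilon}$. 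Choosing $R_1,K$ large enough, the per-edge decay $e^{-\rho}$ from Lemma~\ref{lem:tree_energy_bound} beats both the tree-counting entropy and the rate $\tilde{\nu}(s)$, yielding $P(Q_{s,1}(s,N),\text{detour})\leq e^{-(\tilde{\nu}(s)+\eta)N}$ for some $\eta>0$ and forcing $\tilde{\nu}_{s,1}(s)\geq\tilde{\nu}(s)$. Finally, axis-independence for arbitrary $(s',\delta)$ with $s\in\mathring{\cone}_{s',\delta}$ follows by sandwiching: there exist $\delta_-,\delta_+\in(0,1]$ with $\cone_{s,\delta_-}\subset\cone_{s',\delta}\subset\cone_{s,\delta_+}$, and the two monotonicity bounds pin $\tilde{\nu}_{s',\delta}(s)$ to $\tilde{\nu}(s)$.
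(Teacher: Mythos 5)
Your monotonicity remark and your concatenation argument for $0<\delta_0<\delta_1<1$ with common axis $s'=s$ are correct, and the weighted-average rearrangement $\tilde{\nu}_{s,\delta_0}(s)\leq\frac{j_\star}{k}\tilde{\nu}_{s,\delta_0}(s)+\frac{k-j_\star}{k}\tilde{\nu}_{s,\delta_1}(s)$ is a clean way to absorb the first few blocks. The genuine gap is the case $\delta=1$ (and, downstream of it, the general axis, since your sandwich $\cone_{s',\delta}\subset\cone_{s,\delta_+}$ may force $\delta_+=1$, and fails outright when $\delta=1$ and $s'\neq s$ because $H_{s'}\not\subset H_s$). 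Your bootstrap for $\delta_1=1$ does not work, for two reasons. First, the detour event is \emph{not} exponentially negligible: a cluster can leave $\cone_{s,1-\varepsilon}$ at distance $O(1)$ from the apex (e.g.\ the only open edge at $0$ points sideways out of the cone before the cluster re-enters and heads to $Ns$), which adds $O(1)$ edges to the coarse-graining tree rather than $c(\varepsilon)N/(K+R_1)$; by insertion tolerance such configurations have probability at least $c\,e^{-\tilde{\nu}(s)N(1+o(1))}$, so the claimed bound $e^{-(\tilde{\nu}(s)+\eta)N}$ is false. Second, even for macroscopic detours, Lemma~\ref{lem:tree_energy_bound} with a generic cell prices each tree edge at $-\log P(0\leftrightarrow\Lambda_{R_1}^{c})$, i.e.\ at the box-exit rate, not at $\tilde{\nu}(s)$; when $\tilde{\nu}(s)$ exceeds that rate the energy does not ``beat'' $\tilde{\nu}(s)$ no matter how $R_1,K$ are chosen. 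The one cell whose per-edge cost matches $\tilde{\nu}$ is $L\calU_{\tilde{\nu}}$ via Lemma~\ref{lem:main_energy}, which is proved \emph{after} (and using) the present lemma, so that route is circular.

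The paper closes exactly this hole with one idea your argument is missing: by the exponential decay assumption, the rate of $Q_{s'',\delta''}(s,n)$ is unchanged if the allowed region is intersected with $\Lambda_{\alpha n}$, $\alpha=2r''/\rateCo$ (this localization is already built into Lemma~\ref{lem:constraint_pt2pt_exist}). The building block is then a \emph{bounded} cell $\Delta_n$ of diameter $O(n)$ with $n$ \emph{fixed}, and one concatenates $q\approx(1-\epsilon)N/(n+\ell)$ translates of it along the segment from $\frac{\epsilon}{2}Ns$ to $Ns$. These translates have transverse extent $O(\alpha n)=O(1)$ in $N$ and sit at height $\gtrsim\epsilon N$ along $s$, so for $N$ large they lie in $\mathring{\cone}_{s',\delta'}$ for \emph{any} pair $(s',\delta')$ with $s\in\mathring{\cone}_{s',\delta'}$ — including $\delta'=1$ and $s'\neq s$ — and one takes $N\to\infty$, then $n\to\infty$, then $\epsilon\searrow0$. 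If you replace your blocks $Q_{s,\delta_1}(s,M)$ (with $M=N/k$ growing with $N$) by these localized blocks of fixed size, the $\delta=1$ and general-axis cases follow from your concatenation and the proof becomes essentially the paper's.
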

\begin{proof}
	Fix \(s\in\bbS^{d-1}\) and omit it from notation. Let \(\delta',\delta''\in(0,1]\) and \(s',s''\in \bbS^{d-1}\) be such that \(s\in \mathring{\cone}_{s',\delta'} \cap \mathring{\cone}_{s'',\delta''}\). To lighten notation, write \(r'=\tilde{\nu}_{s',\delta'}\) and \(r''=\tilde{\nu}_{s'',\delta''}\). We first prove \(r'\leq r''\). Let \(\alpha = \frac{2r'' }{\rateCo}\). In particular, defining \(\Delta_n = (\cone_{s'',\delta''} \setminus H_{s''}(ns)) \cap \Lambda_{\alpha n}\) (see Figure~\ref{fig:pt_to_pt_constraint_equiv_construct_cell}),
	\begin{equation*}
		P(0\xleftrightarrow[]{\Delta_n} ns) = e^{-r'' n(1+o_n(1))}.
	\end{equation*}
	\begin{figure}[h]
		\centering
		\includegraphics[scale=0.55]{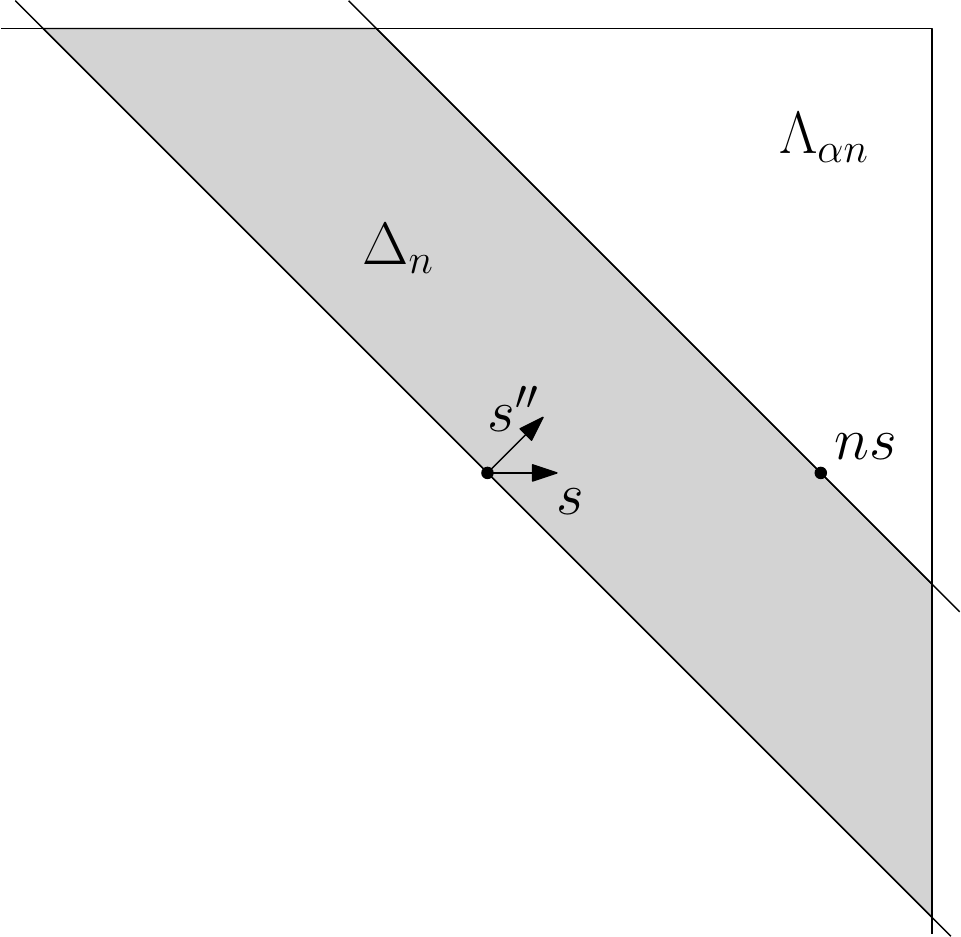}
		\caption{The volume \(\Delta_n\) when \(\delta'' =1\).}
		\label{fig:pt_to_pt_constraint_equiv_construct_cell}
	\end{figure}
	Then, fix \(\epsilon>0\) small and \(n\) large enough. Write \(\ell=\log(n)^2\). For any \(N\) large, \((1-\epsilon)N=q(n+\ell) + b\) with \(b<n+\ell\) (integer parts are implicitly taken). One has
	\begin{multline*}
		P\big(Q_{s',\delta'}(s,N)\big) \geq\\\geq \theta^{\cstEquivDist \epsilon N + b + q\ell} P\Big( \bigcap_{i=0}^{q-1} \big\{ (\frac{\epsilon}{2} N + i(n+\ell)) s \xleftrightarrow[]{(\epsilon N + i(n+\ell)) s + \Delta_n} (\frac{\epsilon}{2} N + i(n+\ell)+ n) s \big\}  \Big),
	\end{multline*}where we used insertion tolerance (Lemma~\ref{lem:insertion_tol_consequence}). See Figure~\ref{fig:pt_to_pt_constraint_equiv_construct_LB}.
	\begin{figure}[h]
		\centering
		\includegraphics[scale=0.8]{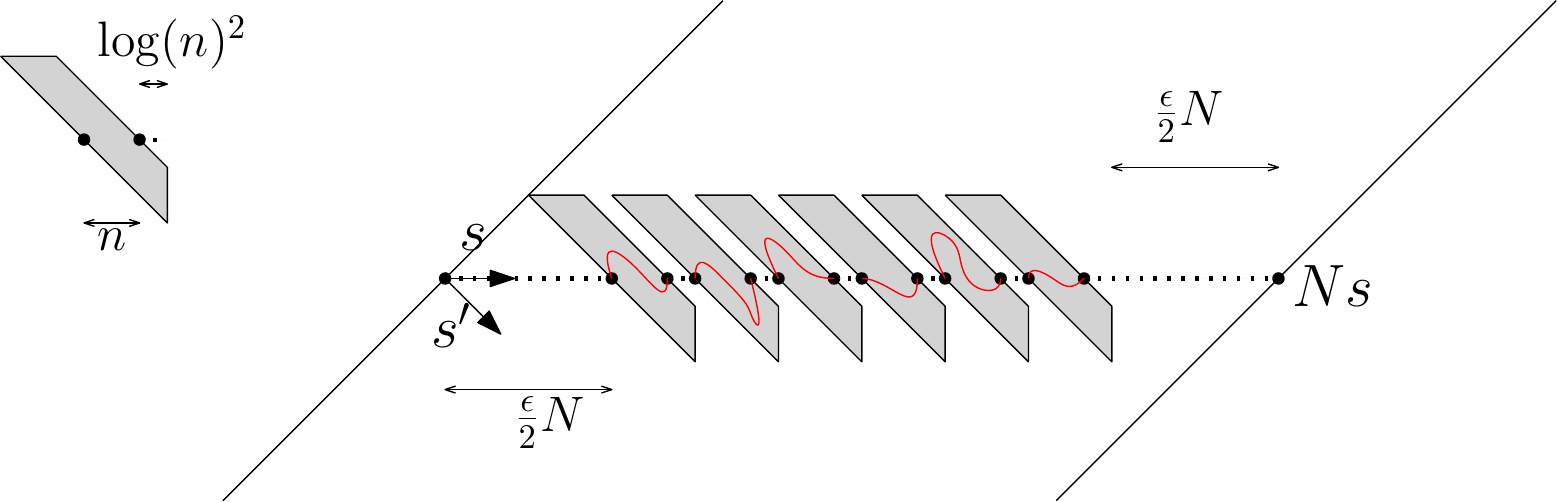}
		\caption{The construction of the lower bound. Dotted lines denote the use of insertion tolerance.}
		\label{fig:pt_to_pt_constraint_equiv_construct_LB}
	\end{figure}
	Using ratio mixing and translation invariance, the probability in the RHS is lower bounded by
	\begin{equation*}
		e^{-q}\prod_{i=0}^{q-1}P\big( 0 \xleftrightarrow[]{\Delta_n}  n s \big) = e^{-q} e^{-r''qn(1+o_n(1))}
	\end{equation*}whenever \(n\) is larger than some fixed constant. Taking the log, dividing by \(-N\) and taking \(N\to\infty\), one obtains
	\begin{equation*}
		r'\leq \cstEquivDist \log(\theta^{-1}) \epsilon + \frac{(\log(\theta^{-1}) \ell +1) (1-\epsilon)}{n+\ell} + \frac{(1-\epsilon)nr''}{n+\ell}
	\end{equation*}\(\epsilon>0\) is arbitrary and \(n\) is arbitrarily large. Take \(n\to\infty\) and then \(\epsilon\searrow 0\) to obtain the wanted inequality.
	
	Repeating the argument with \((s',\delta')\) and \((s'',\delta'')\) exchanged yields the reverse inequality and thus the result.
\end{proof}

From Lemma~\ref{lem:constraint_pt2pt_exist} and~\ref{lem:constraint_pt2pt_s_delta_indep}, it is natural to introduce \(\tilde{\nu}:\Rd\to \R_+\) as the extension by positive homogeneity of \(\tilde{\nu}_{s',\delta}(s)\).

\begin{lemma}
	\label{lem:constraint_rate_norm}
	\(\tilde{\nu}\) defines a norm on \(\Rd\).
\end{lemma}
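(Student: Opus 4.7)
The plan is to verify that $\tilde{\nu}$ satisfies positive definiteness, absolute homogeneity, and the triangle inequality; positive homogeneity for non-negative scalars is built into the definition by extension. First, for any $s \in \bbS^{d-1}$ and admissible $(s', \delta)$, insertion tolerance (Lemma~\ref{lem:insertion_tol_consequence}) applied to a direct lattice path from $0$ to $\mathrm{int}(Ns)$ contained in $\cone_{s', \delta} \setminus H_{s'}(Ns)$ yields $P(Q_{s',\delta}(s,N)) \geq \theta^{O(N)}$, so $\tilde{\nu}(s) < \infty$. Conversely, $Q_{s',\delta}(s,N) \subset \{0 \leftrightarrow \Lambda_{cN}^c\}$ for some $c>0$ uniform in $s$ and $N$ large, hence $\tilde{\nu}(s) \geq \rateCo\, c > 0$; combined with positive homogeneity this gives positive definiteness on $\Rd$. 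Symmetry $\tilde{\nu}(-s) = \tilde{\nu}(s)$ follows by comparing $(s',\delta) = (\pm s,1)$ in Lemma~\ref{lem:constraint_pt2pt_s_delta_indep}: after translating by $Ns$ and using that connection events are invariant under exchanging endpoints, the truncated half-space supports for $Q_{-s,1}(-s,N)$ and $Q_{s,1}(s,N)$ are translates of one another, so the two probabilities coincide.

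The main content is the triangle inequality $\tilde{\nu}(x+y) \leq \tilde{\nu}(x) + \tilde{\nu}(y)$. When $x,y$ are antiparallel it reduces to $(1-\lambda)\tilde{\nu}(x) \leq \tilde{\nu}(x) + \lambda \tilde{\nu}(-x)$ for some $\lambda \geq 0$, which holds by non-negativity. Otherwise $s' := (x/\norm{x} + y/\norm{y})/\norm{x/\norm{x} + y/\norm{y}}$ is well-defined and a direct computation gives $\langle x, s'\rangle, \langle y, s'\rangle, \langle x+y, s'\rangle > 0$, so $x/\norm{x}, y/\norm{y}, (x+y)/\norm{x+y} \in \mathring{H}_{s'} = \mathring{\cone}_{s',1}$ and by Lemma~\ref{lem:constraint_pt2pt_s_delta_indep} all three $\tilde{\nu}$ values are computed through $\tilde{\nu}_{s',1}$. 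At scale $N$, with $\ell = \log(N)^2$ and $\alpha$ large enough (depending on $\tilde{\nu}(x+y)$ and $\rateCo$), lower bound $P(Q_{s',1}((x+y)/\norm{x+y}, N\norm{x+y}))$ by requiring concurrently: (i) a ``left'' occurrence of $Q_{s',1}(x/\norm{x}, N\norm{x} - \ell)$, connecting $0$ to $(N\norm{x}-\ell)\,x/\norm{x}$, whose support lies below the hyperplane $\{\langle v, s'\rangle = N\langle x, s'\rangle - c_x \ell\}$ for $c_x := \langle x, s'\rangle / \norm{x} > 0$; (ii) a ``right'' occurrence, translated by $Nx$, of $Q_{s',1}(y/\norm{y}, N\norm{y} - \ell)$, whose support lies above $\{\langle v, s'\rangle = N\langle x, s'\rangle + c_x \ell\}$; and (iii) two insertion-tolerance bridges of total length $O(\ell)$ joining the endpoints $(N\norm{x}-\ell)x/\norm{x}$ to $Nx$ and $N(x+y) - \ell\, y/\norm{y}$ to $N(x+y)$. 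The supports of the left and right events are separated by at least $c_x \ell$ along $s'$, so ratio mixing factorizes their probabilities up to $1 - |\Lambda_{\alpha N}|e^{-\rateMix c_x \ell / 2} = 1 - o_N(1)$, while the insertion-tolerance contribution is $\theta^{O(\ell)} = e^{-o(N)}$. By Lemma~\ref{lem:constraint_pt2pt_exist}, the two factored probabilities are at least $e^{-N\tilde{\nu}(x)(1+o_N(1))}$ and $e^{-N\tilde{\nu}(y)(1+o_N(1))}$ respectively; taking logs, dividing by $-N$, and sending $N\to\infty$ yields the desired inequality.

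The main obstacle is arranging the two sub-regions so that (a) both lie in the truncated half-space $H_{s'} \setminus H_{s'}(N(x+y))$, (b) they are separated by at least $\Omega(\ell)$ in the $s'$-direction for ratio mixing, and (c) their exponential rates can be identified with $\tilde{\nu}(x)$ and $\tilde{\nu}(y)$ despite the sub-polynomial radial shortenings; the freedom of choice given by Lemma~\ref{lem:constraint_pt2pt_s_delta_indep} is what makes this possible. This is essentially a two-direction analogue of the iteration argument used in the proof of Lemma~\ref{lem:constraint_pt2pt_s_delta_indep}; once the geometric picture is fixed, the error bookkeeping (insertion tolerance cost $\theta^{O(\ell)}$, mixing correction $o_N(1)$, and boundary shifts $O(\ell/N)$) is routine.
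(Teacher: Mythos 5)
Your proof is correct and shares the overall architecture of the paper's (homogeneity and symmetry from translation invariance, triangle inequality by concatenating two directed connections with insertion-tolerance bridges and factorizing via ratio mixing), but the geometric implementation of the triangle inequality is genuinely different. The paper takes $s'=s_{xy}=(x+y)/\norm{x+y}$ (exchanging the roles of $0$ and $N(x+y)$ when $x\notin \mathring H_{s_{xy}}$) and realizes the two sub-connections inside \emph{thin truncated cones} $\cone_{s_x,\delta}$ and $\cone_{-s_y,\delta}(N(x+y))$ shortened by a fraction $\epsilon N$; for $\delta$ small relative to $\epsilon$ these supports are bounded and separated by $\Omega(\epsilon N)$, so mixing gives an error $e^{-c'\epsilon N}$ and one concludes after $\epsilon\searrow 0$. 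You instead take $s'$ to be the bisector of $s_x,s_y$ (which neatly guarantees $\lrangle{x,s'},\lrangle{y,s'}>0$ and removes the case exchange), keep $\delta=1$ so the sub-supports are parallel slabs, and separate them by only $\ell=\log(N)^2$, relying on $N^{O(1)}e^{-c\log^2 N}=o_N(1)$ exactly as in the proofs of Lemmas~\ref{lem:constraint_pt2pt_exist} and~\ref{lem:constraint_pt2pt_s_delta_indep}; the $o(N)$ shortening then makes the $\epsilon\searrow 0$ step unnecessary. Both routes work; the paper's buys bounded supports for free, while yours buys a cleaner limit. The one point you should make fully explicit is the localization: with $\delta=1$ the slab supports are infinite, so the ratio-mixing error $\sum_{e\in F,e'\in F'}e^{-\rateMix \rmd(e,e')}$ diverges unless all three events are first replaced by their $\Lambda_{\alpha N}$-restricted versions (with the rates unchanged, as in the proof of Lemma~\ref{lem:constraint_pt2pt_exist}); your ``$\alpha$ large enough'' and the $|\Lambda_{\alpha N}|$ prefactor indicate you intend this, but the substitution should be stated before mixing is invoked.
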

\begin{proof}
	First, point separation follows from the exponential decay assumption (\(\rateCo>0\)). Then, positive homogeneity of order one is a direct consequence of the way we extended \(\tilde{\nu}\) to \(\Rd\) and of
	\begin{equation*}
		P\big(Q_{s,1}(s,N)\big) = P\big(Q_{-s,1}(-s,N)\big),
	\end{equation*} by translation invariance. Remains the triangle inequality. Fix \(x,y\in\Rd\). Let \(s_{xy}=\frac{x+y}{\norm{x+y}}\), \(s_x=\frac{x}{\norm{x}}\), \(s_y=\frac{y}{\norm{y}}\). We can suppose that \(x,x+y\in \mathring{H}_{s_{xy}}\) (otherwise, exchange the role of \(0\) and \(x+y\), see Figure~\ref{fig:norm_equiv_triangles}).
	\begin{figure}[h]
		\centering
		\includegraphics[scale=0.7]{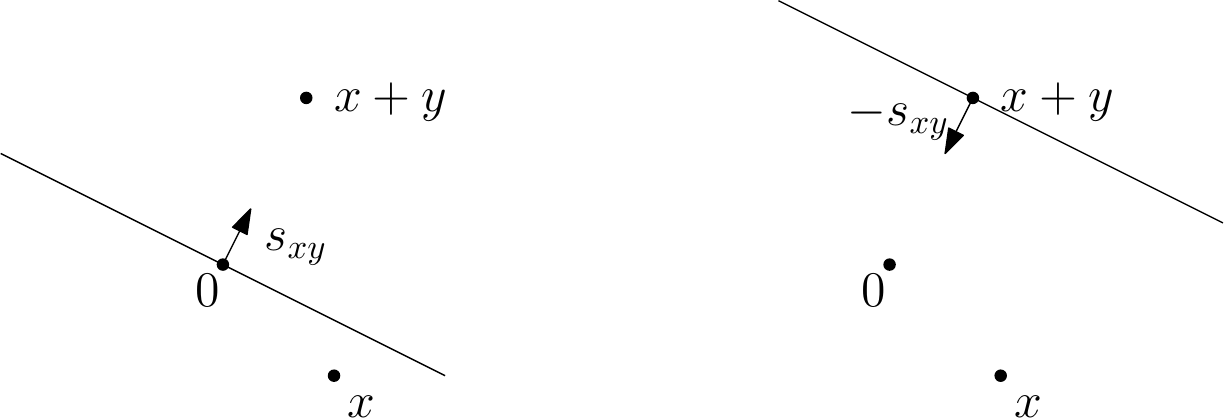}
		\caption{}
		\label{fig:norm_equiv_triangles}
	\end{figure}
	Then, for \(\epsilon>0\) fixed, for any \(\delta>0\) small enough and any \(N\) large
	\begin{multline*}
		P\big(Q_{s_{xy},1}(s_{xy},N\norm{x+y}) \big) \geq \\
		\geq \theta^{\epsilon c(\norm{x}+\norm{y}) N} P\big(Q_{s_x,\delta}(s_x,\norm{x}(1-\epsilon) N), x+y\xleftrightarrow[]{\cone_{-s_y,\delta}^{\norm{y}(1-\epsilon) N}(x+y)} x+\epsilon N\norm{y} s_y\big),
	\end{multline*}where we used insertion tolerance. See Figure~\ref{fig:norm_triangle_ineq}.
	\begin{figure}[h]
		\centering
		\includegraphics[scale=0.8]{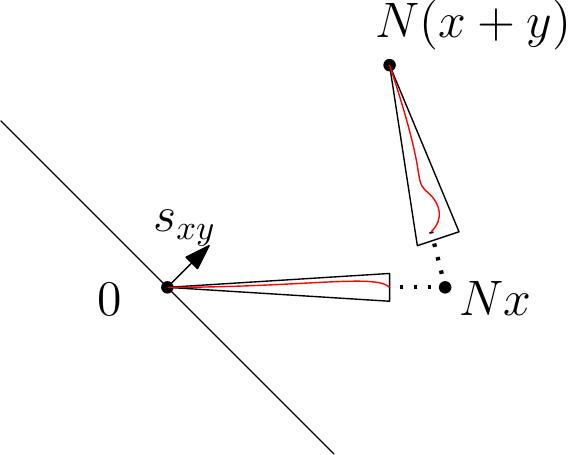}
		\caption{Construction of the forced connection through \(Nx\). Dotted lines denote the use of insertion tolerance.}
		\label{fig:norm_triangle_ineq}
	\end{figure}
	Now, for \(\epsilon>0\) fixed and \(\delta>0\) small enough (depending on \(\epsilon\)), one can use ratio mixing to obtain that the last probability is lower bounded by
	\begin{equation*}
		(1-e^{-c'\epsilon N})P\big(Q_{s_x,\delta}(s_x,\norm{x}(1-\epsilon) N)\big)P\big( Q_{s_y,\delta}(s_y,\norm{y}(1-\epsilon) N)\big).
	\end{equation*}Taking the log, dividing by \(-N\) and sending \(N\to\infty\), one obtains
	\begin{equation*}
		\norm{x+y}\tilde{\nu}(s_{xy}) \leq \log(\theta^{-1})\epsilon c(\norm{x}+\norm{y}) + (1-\epsilon)\norm{x}\tilde{\nu}(s_x) +(1-\epsilon)\norm{y}\tilde{\nu}(s_y).
	\end{equation*}\(\epsilon>0\) was arbitrary, taking \(\epsilon\searrow 0\) and using positive homogeneity gives \(\tilde{\nu}(x+y)\leq \tilde{\nu}(x)+\tilde{\nu}(y)\).
\end{proof}

\subsection{Point-to-half-space}

\begin{lemma}
	\label{lem:pt2HS_exists}
	Let \(s\in\bbS^{d-1}\). The limit
	\begin{equation*}
		\nu_H(s) = \lim_{N\to\infty}-\frac{1}{N} \log P\big(0\leftrightarrow H_s(Ns)\big)
	\end{equation*}exists.
\end{lemma}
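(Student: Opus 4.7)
The plan is to mirror the structure of Lemma~\ref{lem:constraint_pt2pt_exist}: reduce to a spatially restricted event, establish quasi sub-additivity of the associated log-sequence via a first-hit pigeonhole argument combined with a bridge and ratio mixing, then invoke Lemma~\ref{app:quasi_sub_add}.

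First I would set \(l := 2\overline{\nu}_H(s)\) and \(\alpha := l/\rateCo\). The assumption \(P(0 \leftrightarrow \Lambda_{\alpha N}^{c}) \leq e^{-lN}\), together with \(P(0 \leftrightarrow H_s(Ns)) \geq e^{-(2l/3)N}\) eventually (by the limsup definition of \(\overline{\nu}_H\)), shows that conditioning on the cluster of \(0\) being contained in \(\Lambda_{\alpha N}\) costs only a \(1-o(1)\) factor. On this event, any realizing path may be truncated at its first crossing of \(\{\langle x,s\rangle = N\}\) to yield a connection using only edges in \(L_N := \Lambda_{\alpha N} \cap \{\langle x,s\rangle \leq N + R\}\). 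Setting \(a_N := -\log P(0 \xleftrightarrow{L_N} H_s(Ns))\), it therefore suffices to show that \(a_N/N\) converges.

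For quasi sub-additivity, fix large \(n \leq m\), set \(\ell := \log^{2}m\) and \(N := n + m + \ell\). Let \(y^*(\omega)\) be the smallest (for the fixed order on \(\Zd\)) vertex of the \(L_n\)-cluster of \(0\) lying on \(\{\langle x,s\rangle = n\}\). Pigeonhole over the \(O(n^{d-1})\) possible values yields a specific \(y_0\) such that the \(\calF_{E(L_n)}\)-measurable event \(A_{y_0} := \{y^* = y_0\}\) satisfies \(P(A_{y_0}) \geq e^{-a_n}/(Cn^{d-1})\). Define \(B_{y_0} := \{y_0 + \ell s \xleftrightarrow{M_{y_0}} H_s(Ns)\}\) where \(M_{y_0} := (y_0 + \ell s) + L_m^{+}\) with \(L_m^{+} := \Lambda_{\alpha m} \cap \{-R \leq \langle x, s\rangle \leq m+R\}\), restricting the forward connection. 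Applying the first-hit reduction symmetrically on the incoming side (decomposing \(\{0 \leftrightarrow H_s(ms)\}\) over the last vertex where a shortest realizing path lies at level \(\leq 0\), and using \(\sum_{v}P(0\leftrightarrow v) < \infty\) from exponential decay) gives \(P(B_{y_0}) \geq e^{-a_m}/(C m^{d-1})\) with \(B_{y_0}\)'s support in \(\{\langle x,s\rangle \geq n + \ell - R\}\). The supports of \(A_{y_0}\) and \(B_{y_0}\) are then Euclidean-separated by \(\ell - 2R\), so exponential ratio weak mixing gives \(P(A_{y_0} \cap B_{y_0}) \geq P(A_{y_0}) P(B_{y_0})(1 - o(1))\). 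Finally, for a lattice path \(\gamma\) of graph-length \(\leq \cstEquivDist \ell\) from \(y_0\) to \(y_0 + \ell s\), insertion tolerance (Lemma~\ref{lem:insertion_tol_consequence}) provides
\[ P(A_{y_0} \cap \{\gamma \subset \omega\} \cap B_{y_0}) \geq \theta^{\cstEquivDist \ell} P(A_{y_0} \cap B_{y_0}), \]
and this intersection is contained in \(\{0 \xleftrightarrow{L_N} H_s(Ns)\}\) (taking \(\alpha\) large enough that the enlarged region fits). Combining yields
\[ a_N \leq a_n + a_m + \cstEquivDist \log(\theta^{-1})\ell + O(\log(n \vee m)), \]
which is exactly the quasi sub-additivity condition of Lemma~\ref{app:quasi_sub_add} with \(g(m) = \log^{2}m\), concluding existence of \(\nu_H(s) = \lim_N a_N / N\).

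The main obstacle is the construction of \(B_{y_0}\) with support in \(\{\langle x,s\rangle \geq n + \ell - R\}\) while keeping \(P(B_{y_0})\) comparable (up to polynomial factors) to \(e^{-a_m}\): the naive translated-slab \((y_0 + \ell s) + L_m\) extends down to \(\langle x,s\rangle = n + \ell - \alpha m\), which would overlap the support of \(A_{y_0}\) and preclude the use of ratio mixing. The resolution is the symmetric first-hit-from-below reduction, which accepts a polynomial \(m^{d-1}\) loss that is ultimately absorbed into the \(O(\log(n \vee m))\) correction.
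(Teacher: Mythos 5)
Your route is genuinely different from the paper's: you aim at quasi sub-additivity of \(a_N\) plus Lemma~\ref{app:quasi_sub_add}, whereas the paper never glues two half-space connections at all --- it picks a subsequence \(n_k\) realizing \(\overline{\nu}_H\), coarse-grains \(C_0\) with the cell \(\Lambda_{\alpha n_k}\setminus H_s(n_ks)\), and uses the energy--entropy bound of Lemma~\ref{lem:tree_energy_bound} to get \(\underline{\nu}_H\geq\overline{\nu}_H\) directly, thereby sidestepping both the endpoint-localization and the backtracking problems you have to fight. Your scheme is viable, and the last-exit reduction is the right tool for the backtracking issue, but as written there are two genuine gaps.

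First, \(A_{y_0}=\{y^*=y_0\}\) is not a local connection event (it is \(\{0\xleftrightarrow{L_n}y_0\}\) intersected with the complements of \(\{0\xleftrightarrow{L_n}y\}\) for \(y<y_0\)), so the ratio weak mixing hypothesis --- assumed only for the class of local connection events --- does not apply to the pair \((A_{y_0},B_{y_0})\). This is repaired by pigeonholing instead over the union bound \(\{0\xleftrightarrow{L_n}H_s(ns)\}\subseteq\bigcup_y\{0\xleftrightarrow{L_n}y\}\) and gluing with the admissible event \(\{0\xleftrightarrow{L_n}y_0\}\); the polynomial loss is the same.

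Second, and more seriously, your inequality \(a_N\leq a_n+a_m+\cstEquivDist\log(\theta^{-1})\ell+O(\log(n\vee m))\) is \emph{not} the hypothesis of Lemma~\ref{app:quasi_sub_add}, which requires the error to depend on \(\min(n,m)\) only, and the discrepancy is not cosmetic. In the proof of that lemma one iterates \(q\approx N/n_k\) times with the short block \(n_k\) fixed and the long block of length \(\approx N\); an error of \(C\log(\max)=C\log N\) per iteration accumulates to \(\approx \tfrac{N\log N}{n_k}\), so the conclusion degenerates to \(\tfrac{a_N}{N}\leq \underline{l}+\tfrac{C\log N}{n_k}+o(1)\), which is vacuous as \(N\to\infty\) for fixed \(n_k\). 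The \(\log(\max)\) term is forced in your set-up: whichever piece you place first, one of your two reductions (the endpoint pigeonhole for the piece you bridge from, the last-exit reduction for the piece you bridge to) is applied to the long piece. The fix is to take the last-exit reduction out of the sub-additivity step: define \(a_N^+=-\log P(0\xleftrightarrow{L_N^+}H_s(Ns))\) with the forward restriction built in, record once that \(a_N\leq a_N^+\leq a_N+C\log N\) by your last-exit union bound (so both sequences have the same linear rate), and prove sub-additivity for \(a_N^+\) with the \emph{short} piece placed spatially first. Then the only per-step loss is the pigeonhole over the \(O\big(\min(n,m)^{d-1}\big)\) endpoints of the short piece, the long piece needs no reduction because it is forward-restricted by definition, and the correction becomes \(C\log^2(\min(n,m))\) as required. (Minor: the separation \(\ell\) should likewise be \(\log^2(\min(n,m))\); the mixing error is then bounded by the size of the smaller support times \(e^{-\rateMix\ell/2}\), hence small once \(\min(n,m)\) is large.)
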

\begin{proof}
	We fix \(s\in\bbS^{d-1}\) and omit it from the notation. Let \((n_k)_{k\geq 1}\) be an increasing sequence of integers such that
	\begin{equation*}
		\lim_{k\to\infty} -\frac{1}{n_k} \log P\big(0\leftrightarrow  H_s(n_ks)\big) =\limsup_{N\to\infty} -\frac{1}{N} \log P\big(0\leftrightarrow  H_s(Ns)\big) \equiv \overline{\nu}_H
	\end{equation*} In particular, \(P\big(0\leftrightarrow H_s(n_ks)\big) = e^{-n_k\overline{\nu}_H (1+o_k(1))}\).
	
	By our hypotheses,
	\begin{equation*}
	P(0\leftrightarrow \Lambda_M )\leq e^{-\rateCo M}
	\end{equation*}for any \(M\) large enough. Let then \(\alpha = \frac{\overline{\nu}_H}{\rateCo}\). Set \(\Delta_k= \Lambda_{\alpha n_k}\setminus H_{s}(n_k s) \), \(K_k=  \log(n_k)^2\), \(\overline{\Delta}_k = \bigcup_{v\in\Delta_k} \Lambda_{K_k}(v)\). See Figure~\ref{fig:coarse_graining_HS_Box_cell}. In particular, we have
	\begin{equation}
	\label{eq:energy_inf_vol}
	P(0\leftrightarrow \Delta_k^c) \leq P(0\leftrightarrow \Lambda_{\alpha n_k}^c ) + P\big(0\leftrightarrow H_s (n_k s) \big) \leq e^{ -\overline{\nu}_H n_k(1+o_k(1))}
	\end{equation}where we used a union bound.
	
	\begin{figure}[h]
		\centering
		\includegraphics[scale=0.8]{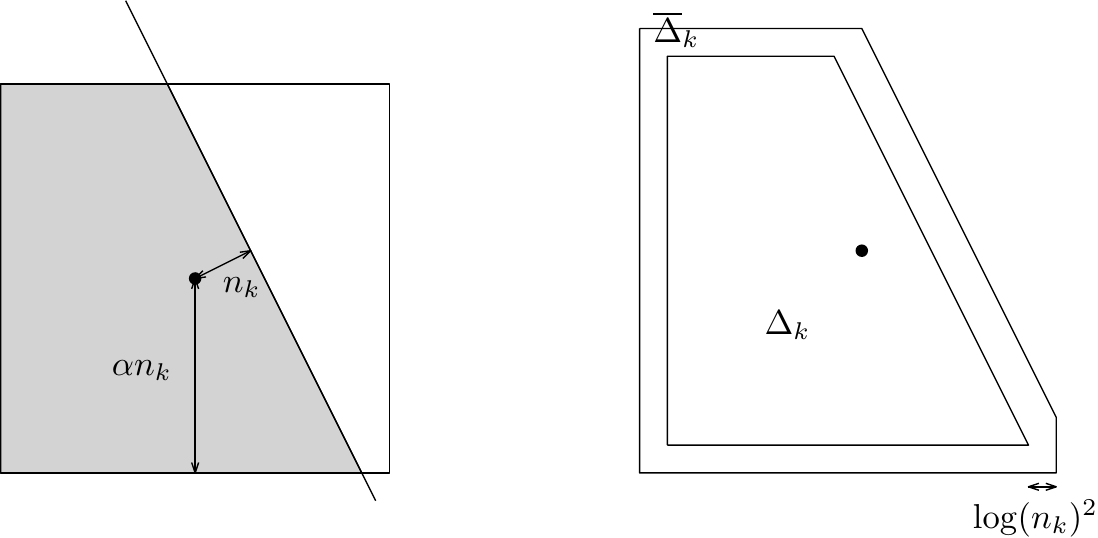}
		\caption{The cell \(\Delta_k\).}
		\label{fig:coarse_graining_HS_Box_cell}
	\end{figure}
	
	We now coarse-grain \(C_0\) using \(\CG_k\equiv \CG_{\Delta_k,K_k}\) (see Section~\ref{sec:coarse_graining}). Write \(\CG_k(C_0) =(t(C_0), f(C_0))\). One has that \(C_0\) is included in an \(3\alpha n_k\)-neighbourhood of \(t(C_0)\). We have
	\begin{equation}
	P(0\leftrightarrow X) = \sum_{T\in \calT} P\big(0\leftrightarrow X, \CG_k(C_0) = T \big) \leq \sum_{T\sim X} P\big(\CG_k(C_0) = T \big)
	\end{equation}where \(T\sim X\) means that \(\rmd(X,t(C_0))\leq 3\alpha n_k\). We can then use Lemma~\ref{lem:tree_energy_bound} and the bound on the number of trees to obtain that for any fixed large enough \(k\), as \(N\) goes to infinity,
	\begin{align*}
	P\big(0\leftrightarrow H_s(Ns)\big) &\leq \sum_{l\geq \frac{N}{n_k+K_k}}\sum_{T\in \calT_{l}} P\big(\CG_k(C_0) = T \big) \\
	&\leq  \sum_{l\geq \frac{N}{n_k +K_k}}e^{c\log(d_{\overline{\Delta}_k}) l} e^{-\overline{\nu}_H n_k l (1+o_k(1))}\\
	&=\sum_{l\geq \frac{N}{n_k+K_k}} e^{-\overline{\nu}_H n_k l (1+o_k(1) +o_{n_k}(1))} \\
	&= e^{-N\overline{\nu}_H (1+o_k(1) +o_{n_k}(1))} \big(1-o_k(1)\big)^{-1}
	\end{align*}as \(d_{\overline{\Delta}_k}\) is upper bounded by a polynomial of degree \(d\) in \(n_k\) and any tree \(T\) with \(T\sim H_s(N_s)\) has \(|f|\geq \frac{N}{n_k+K_k}\). In particular, for any \(k\) large enough,
	\begin{equation*}
	\underline{\nu}_H\equiv\liminf_{N\to\infty}-\frac{1}{N}\log P(0\leftrightarrow Ns+H_s) \geq \overline{\nu}_H (1+o_k(1) +o_{n_k}(1)).
	\end{equation*} Taking \(k\to \infty\) yields \(\underline{\nu}_H\geq \overline{\nu}_H\). The direction \(s\) being arbitrary, \(\overline{\nu}_H(s) = \underline{\nu}_H(s)\) for all \(s\in\bbS^{d-1}\). 
\end{proof}

\subsection{Constrained point-to-half-space}

\begin{lemma}
	\label{lem:constraint_pt2HS_exists}
	Let \(s\in\bbS^{d-1}\). The limit
	\begin{equation*}
	\tilde{\nu}_H(s) = \lim_{N\to\infty}-\frac{1}{N} \log P\big(0\xleftrightarrow[]{H_s} H_s(Ns)\big)
	\end{equation*}exists. Moreover,
	\begin{equation*}
		\tilde{\nu}_H(s) = \nu_H(s).
	\end{equation*}
\end{lemma}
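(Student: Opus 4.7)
The easy direction is immediate: since $\{0 \xleftrightarrow{H_s} H_s(Ns)\} \subset \{0 \leftrightarrow H_s(Ns)\}$, Lemma~\ref{lem:pt2HS_exists} gives
\[
\liminf_{N\to\infty} -\tfrac{1}{N}\log P\bigl(0 \xleftrightarrow{H_s} H_s(Ns)\bigr) \;\geq\; \nu_H(s).
\]
So the plan is to prove the matching $\limsup \leq \nu_H(s)$. This single inequality will yield existence of $\tilde{\nu}_H(s)$ \emph{and} the equality $\tilde{\nu}_H(s)=\nu_H(s)$ simultaneously.

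For the non-trivial direction, the strategy is to convert a typical unconstrained connection into a constrained one via translation invariance and insertion tolerance. Using the same cluster bound as in Lemma~\ref{lem:pt2HS_exists}, for $\alpha>\nu_H(s)/\rateCo$ (so that $P(0\leftrightarrow \Lambda_{\alpha n}^c)\le e^{-\rateCo \alpha n}$ is smaller than $e^{-n\nu_H}$) we have
\[
P\bigl(0\leftrightarrow H_s(ns),\ C_0\subset \Lambda_{\alpha n}\bigr) \;\geq\; (1-o_n(1))\,e^{-n\nu_H(s)(1+o_n(1))}.
\]
Translating the event by $\alpha n s$ places the cluster inside $\Lambda_{\alpha n}(\alpha n s) \subset H_s$, so by translation invariance we obtain a constrained block
\[
P\bigl(\alpha n s\xleftrightarrow{H_s} H_s((1+\alpha)n s)\bigr) \;\geq\; (1-o_n(1))\,e^{-n\nu_H(s)(1+o_n(1))}.
\]
I would then chain $q\sim N/(n+\ell)$ translated copies of this block along the $s$-direction (with a mixing gap $\ell=\log(n)^2$), factorize the intersection using ratio mixing as in the proofs of Lemmas~\ref{lem:constraint_pt2pt_exist}--\ref{lem:constraint_pt2pt_s_delta_indep}, and prepend a \emph{single} insertion-tolerance path from $0$ to the first block's starting point of length $O(\alpha n)$ inside $H_s$ (cost $\theta^{\cstEquivDist\alpha n}$, which is a constant in $N$ once $n$ is fixed). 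The resulting lower bound on $P(0\xleftrightarrow{H_s} H_s(Ns))$ is at least $e^{-N\nu_H(s)(1+o_n(1))+O(1)}$, so sending $N\to\infty$ with $n$ fixed and then $n\to\infty$ yields $\limsup_N -\tfrac{1}{N}\log P(0\xleftrightarrow{H_s} H_s(Ns))\leq \nu_H(s)$.

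The main obstacle is the \emph{inter-block gluing}: each block's connection to $H_s(x_i+ns)$ has a random endpoint, and naively matching it to the prescribed starting point of the next block via insertion tolerance costs $\theta^{\cstEquivDist\alpha n}$ per block, which accumulated over $q\sim N/n$ blocks would contribute a term linear in $N$ and destroy the sharp rate. The fix is to choose the block geometry so that this matching happens \emph{essentially for free}: take consecutive cells $\Lambda_{\alpha n}(x_i)$ overlapping in such a way that the endpoint of block $i$ already lies inside the next block's cell, reducing the gluing to a ratio-mixing correction on the small-support cell events rather than an explicit path insertion. Verifying that the ratio-mixing inequality~\eqref{eq:def:ratio_mix} can still be applied on the relevant \emph{local} supports (of size $O(n)$ at spacing $O(\ell)$), and that the accumulated multiplicative mixing corrections $(1+C|\Delta|e^{-\rateMix K})^q$ remain $1+o_n(1)$ when $K=\ell=\log(n)^2$, is where I expect the bulk of the technical work to lie.
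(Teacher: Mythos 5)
Your reduction to the single inequality $\limsup_N -\tfrac{1}{N}\log P(0\xleftrightarrow{H_s}H_s(Ns))\leq\nu_H(s)$ is correct, and your confined block event $\{0\xleftrightarrow{\Lambda_{\alpha n}}H_s(ns)\}$ with probability $e^{-n\nu_H(s)(1+o_n(1))}$ is sound. But the obstacle you yourself flag is fatal to the route you propose, and the fix you sketch does not repair it. First, ratio weak mixing~\eqref{eq:def:ratio_mix} is only useful when the supports of the two events are far apart: for overlapping or adjacent cells the right-hand side $\cstMix\sum_{e,e'}e^{-\rateMix\rmd(e,e')}$ is of order $|F|\,|F'|\gg1$ and the inequality says nothing. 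Second, and more fundamentally, arranging the cells so that the random exit point of block $i$ lies \emph{geometrically} inside cell $i+1$ does not make the two clusters \emph{merge}: the intersection of the two block events still does not imply a connection across both slabs. To glue, you must connect the random exit point of block $i$ to the deterministic seed of block $i+1$, and the only tool available without correlation inequalities is Lemma~\ref{lem:insertion_tol_consequence}, at cost $\theta^{O(\alpha n)}$ per block --- over $q\sim N/n$ blocks this is a loss linear in $N$ whose rate does not vanish as $n\to\infty$, exactly as you feared. (The chaining route can be salvaged, but by a different fix: union-bound the block over its exit point $x\in H_s(ns)\cap\Lambda_{\alpha n}$, pigeonhole some $x^*$ with $P(0\xleftrightarrow{\Lambda_{\alpha n}}x^*)\geq Cn^{-d}e^{-n\nu_H(s)(1+o_n(1))}$, and chain \emph{point-to-point} blocks in the direction $x^*$ with $\log(n)^2$ gaps as in Lemma~\ref{lem:constraint_pt2pt_exist}; consecutive blocks then share a deterministic point and the polynomial loss per block is harmless.)

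The paper takes a different and shorter route that avoids gluing altogether: it compares the constrained and unconstrained events directly. After moving the starting point to $\epsilon Ns$ by insertion tolerance (a cost $\theta^{\cstEquivDist\epsilon N}$ that disappears when $\epsilon\searrow0$ at the end), it coarse-grains with the cell $\Delta_n=\Lambda_{\alpha n}\setminus H_s(ns)$ and observes that any cluster realizing $\{\epsilon Ns\leftrightarrow H_s(Ns)\}$ but \emph{not} $\{\epsilon Ns\xleftrightarrow{H_s}H_s(Ns)\}$ must in addition reach $H_s^c$, so its skeleton carries at least $\frac{\epsilon N}{\sqrt d(\alpha n+\log(n)^2)}$ extra vertices; by Lemma~\ref{lem:tree_energy_bound} and the tree count, such clusters cost an extra factor $e^{-\nu_H(s)\epsilon N/(\sqrt d\alpha)(1+o(1))}$ and are negligible, whence $P(\epsilon Ns\xleftrightarrow{H_s}H_s(Ns))=(1-o(1))\,P(\epsilon Ns\leftrightarrow H_s(Ns))$. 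You should either adopt this comparison argument or implement the point-to-point chaining fix; as written, your proof has a genuine gap precisely at the gluing step.
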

\begin{proof}
	We fix \(s\in\bbS^{d-1}\) and omit it from the notation. By inclusion of events, one has \(\liminf_{N\to\infty}-\frac{1}{N} \log P\big(0\xleftrightarrow[]{H_s} H_s(Ns)\big) \geq \nu_H\). To obtain the other bound, start with, for any \(\epsilon>0\),
	\begin{equation}
	\label{eq:pt2HS_eq1}
		P\big(0\leftrightarrow H_s(Ns)\big) \leq \theta^{-\cstEquivDist \epsilon N}P(\epsilon Ns \leftrightarrow H_s(Ns)) = e^{ \lambda \epsilon N}e^{-\nu_H(1-\epsilon)N(1+o_N(1))},
	\end{equation}where \(\lambda = \log(\theta^{-1})\cstEquivDist>0\) and
	\begin{equation}
	\label{eq:pt2HS_eq2}
		P\big(0\xleftrightarrow[]{H_s} H_s(Ns)\big) \geq e^{-\lambda\epsilon N}P(\epsilon Ns \xleftrightarrow[]{H_s} H_s(Ns)),
	\end{equation}by Lemma~\ref{lem:insertion_tol_consequence} (insertion tolerance).
	
	We then use a coarse-graining described in Section~\ref{sec:coarse_graining} (the same as in the proof of Lemma~\ref{lem:pt2HS_exists} with different sizes). Set \(\Delta_n = \Lambda_{\alpha n}\setminus H_{s}(n s)\), \(K_n= \log(n)^2\), and \(\overline{\Delta}_n= \bigcup_{v\in\Delta_n}\Lambda_{K_n}(v)\), where \(\alpha\) is the same quantity as in the proof of Lemma~\ref{lem:pt2HS_exists}. As in Lemma~\ref{lem:pt2HS_exists},
	\begin{equation*}
		P(0\leftrightarrow \Delta_n^c) \leq e^{-\nu_Hn(1+o_n(1))}.
	\end{equation*}We use \(\CG_n\equiv \CG_{\Delta_n,K_n}\). Write \(\CG_n(C_0) = \big(t(C_0),f(C_0)\big)\).
	
	Now, any cluster contributing to \(\{\epsilon Ns \leftrightarrow H_s(Ns)\}\setminus \{\epsilon Ns \xleftrightarrow[]{H_s} H_s(Ns)\}\) has \(|f|\geq \frac{\epsilon N}{\sqrt{d}(\alpha n+\log(n)^2)} + \frac{(1-\epsilon) N}{n+\log(n)^2} \) (see Figure~\ref{fig:coarse_graining_HS_Box_CG_no_backtrack}). So, applying the same argument as in Lemma~\ref{lem:constraint_pt2HS_exists},
	\begin{equation*}
		P(\epsilon Ns \leftrightarrow H_s(Ns)) - P(\epsilon Ns \xleftrightarrow[]{H_s} H_s(Ns)) \leq e^{-\nu_H N (1-\epsilon + \frac{\epsilon}{\sqrt{d}\alpha}+o_n(1))}.
	\end{equation*}In particular, for any fixed \(n\) large enough, and any \(N\) large
	\begin{equation*}
		\frac{P(\epsilon Ns \xleftrightarrow[]{H_s} H_s(Ns))}{P(\epsilon Ns \leftrightarrow H_s(Ns))} \geq 1-e^{-\nu_H N (\epsilon' +o_n(1)+o_N(1))},
	\end{equation*} where \(\epsilon'=\frac{\epsilon}{\sqrt{d}\alpha}\). Plugging this in~\eqref{eq:pt2HS_eq2}, and using~\eqref{eq:pt2HS_eq1}, one obtains
	\begin{align*}
		P\big(0\xleftrightarrow[]{H_s} H_s(Ns)\big) &\geq e^{-2\lambda\epsilon N}(1-e^{-\nu_H N (\epsilon' +o_n(1)+o_N(1))})P(0 \leftrightarrow H_s(Ns))\\
		& =  e^{-2\lambda\epsilon N}(1-e^{-\nu_H N (\epsilon' +o_n(1)+o_N(1))})e^{-\nu_HN(1+o_N(1))}.
	\end{align*}
	In particular \(\limsup_{N\to\infty} -\frac{1}{N}\log P(0\xleftrightarrow[]{H_s} H_s(Ns)) \leq \nu_H +2\lambda\epsilon\). \(\epsilon>0\) being arbitrary, taking \(\epsilon\searrow 0\) yields the result.
	\begin{figure}[h]
		\centering
		\includegraphics[scale=0.8]{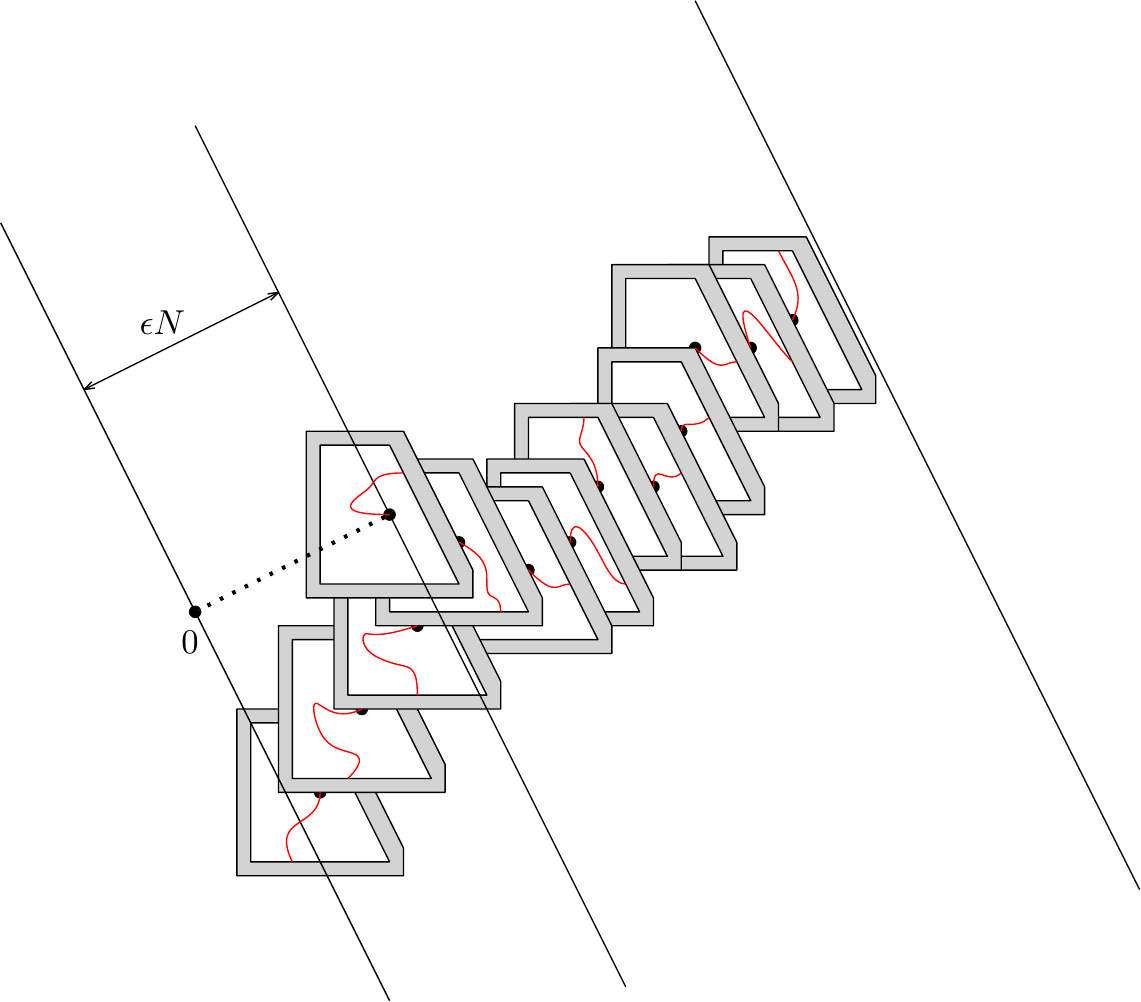}
		\caption{Coarse graining of a cluster contributing to \(\{\epsilon Ns \leftrightarrow H_s(Ns)\}\setminus \{\epsilon Ns \xleftrightarrow[]{H_s} H_s(Ns)\}\).}
		\label{fig:coarse_graining_HS_Box_CG_no_backtrack}
	\end{figure}
\end{proof}

We highlight at this point that we could easily remove the ``directed constraint'' for point-to-half-spaces connections, which seems to be much harder to do for point-to-point connections.

\subsection{Convex duality}

We saw that \(\tilde{\nu}\) defines a norm on \(\Rd\). In particular, \(\calU_{\tilde{\nu}}\) (the unit ball for \(\tilde{\nu}\)) is a convex set. To each \(s\in\bbS^{d-1}\), we associate the set of \emph{dual directions}
\begin{equation*}
s^{\star} = \Big\{s'\in\bbS^{d-1}:\ H_{s'}\big(\frac{\lrangle{s,s'}}{\tilde{\nu}(s)} s'\big)\cap \calU_{\tilde{\nu}} \subset\partial\calU_{\tilde{\nu}} \Big\}.
\end{equation*}It is the set of directions normal to the boundary of half-spaces tangent to \(\calU_{\tilde{\nu}}\) at \(\frac{s}{\tilde{\nu}(s)}\) (see Figure~\ref{fig:Convexity_duality}). By abuse of notation, we will write \(s^{\star}\) for an arbitrarily chosen element of the set. It satisfies \(\lrangle{s,s^{\star}}>0\). Moreover, for a fixed \(s^{\star}\), any \(s\) having \(s^{\star}\) as dual is a minimizer of \(s'\mapsto \frac{\tilde{\nu}(s') }{\lrangle{s^{\star},s'}}\) under the constraint \(\lrangle{s^{\star},s'}>0\). Notice that this notion of duality is not the classical convex duality between \(\calU_{\tilde{\nu}}\) and \(\calW_{\tilde{\nu}}\) (but it is related via normalization of the dual directions).

\begin{figure}[h]
	\centering
	\includegraphics[scale=0.8]{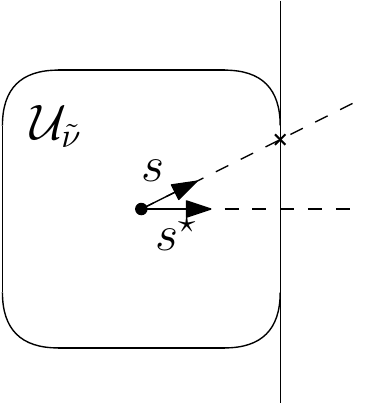}
	\caption{Duality between directions.}
	\label{fig:Convexity_duality}
\end{figure}

The duality statement is
\begin{lemma}
	\label{lem:duality}
	For any \(s\in\bbS^{d-1}\),
	\begin{equation}
	\tilde{\nu}(s) = \nu_H(s^{\star})\lrangle{s,s^{\star}}.
	\end{equation}
\end{lemma}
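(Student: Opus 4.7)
The statement asserts a convex-duality identity between the directed point-to-point rate $\tilde\nu$ and the (directed) point-to-hyperplane rate $\nu_H$. The plan is to prove it as two matching inequalities.

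For $\tilde\nu(s)\geq \nu_H(s^\star)\lrangle{s,s^\star}$ I would use inclusion of events. Since $\cone_{s^\star,1}=H_{s^\star}$, $\lrangle{Ms,s^\star}=M\lrangle{s,s^\star}$, and the half-space $H_{s^\star}(Ms)$ coincides with $H_{s^\star}(M\lrangle{s,s^\star}s^\star)$ (both equal $\{x:\lrangle{x,s^\star}\geq M\lrangle{s,s^\star}\}$), the event $Q_{s^\star,1}(s,M)=\{0\xleftrightarrow{H_{s^\star}\setminus H_{s^\star}(Ms)} Ms\}$ is contained in $\{0\xleftrightarrow{H_{s^\star}} H_{s^\star}(M\lrangle{s,s^\star}s^\star)\}$: indeed $Ms$ lies on the boundary hyperplane and the realizing path stays in $H_{s^\star}$. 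Taking $-\tfrac{1}{M}\log P$ and letting $M\to\infty$ via Lemmas~\ref{lem:constraint_pt2pt_exist} and~\ref{lem:pt2HS_exists} yields the inequality.

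For the reverse $\tilde\nu(s)\leq \nu_H(s^\star)\lrangle{s,s^\star}$ the plan is to combine a union bound over directions with a coarse-graining. First restrict to $\{C_0\subset\Lambda_{\alpha N}\}$ using $P(0\leftrightarrow \Lambda_n^c)\leq e^{-\rateCo n}$ with $\alpha$ large. Then cover $\bbS^{d-1}\cap\{\lrangle{\cdot,s^\star}>0\}$ by a $\delta'$-net $\{s'_i\}$ of cardinality $O(\delta'^{-(d-1)})$, and decompose $\{0\xleftrightarrow{H_{s^\star}} H_{s^\star}(Ns^\star)\}$ by the approximate direction of the first-hit point on $\partial^{\exterior}H_{s^\star}(Ns^\star)\cap\Lambda_{\alpha N}$. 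For each $s'_i$, the corresponding sub-event should be controlled by a coarse-graining in the spirit of Section~\ref{sec:coarse_graining} and Lemma~\ref{lem:constraint_pt2HS_exists}, using cells shaped as truncated thin cones around $s'_i$ of length $n$ designed so that each traversal contributes a factor $\leq e^{-\tilde\nu(s'_i)n(1+o_n(1))}$; reaching $H_{s^\star}(Ns^\star)$ forces at least $\sim N/(n\lrangle{s'_i,s^\star})$ traversals. Lemma~\ref{lem:tree_energy_bound} and the tree count then give
\[
P\bigl(0\xleftrightarrow{H_{s^\star}} H_{s^\star}(Ns^\star)\bigr) \leq \mathrm{poly}(N,\delta'^{-1})\cdot \max_i e^{-N\tilde\nu(s'_i)/\lrangle{s'_i,s^\star}(1+o_n(1))}.
\]
By the convex-duality characterization of $s^\star$ spelled out just before the lemma ($s$ minimizes $s'\mapsto \tilde\nu(s')/\lrangle{s',s^\star}$ on the relevant domain), as $\delta'\to 0$ the maximum over the net converges to $e^{-N\tilde\nu(s)/\lrangle{s,s^\star}}$, giving $\nu_H(s^\star)\geq \tilde\nu(s)/\lrangle{s,s^\star}$.

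The main technical obstacle is the cell design in the second part. The straightforward half-box cell of Lemma~\ref{lem:pt2HS_exists} only delivers rate $\nu_H(s'_i)$ per traversal, but here the tighter directed rate $\tilde\nu(s'_i)$ is needed. I expect this to be achievable by truncated cones of small opening angle $\delta''$, trading the conic restriction against the cumulative ratio-mixing errors across the polynomial number of traversals; the $\delta''$-dependence can be absorbed into the $o(1)$ in the exponent by Lemma~\ref{lem:constraint_pt2pt_s_delta_indep}, which guarantees $\tilde\nu_{s'_i,\delta''}=\tilde\nu(s'_i)$ independently of $\delta''$.
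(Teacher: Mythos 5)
Your first inequality ($\tilde{\nu}(s)\geq \nu_H(s^{\star})\lrangle{s,s^{\star}}$) is correct and is essentially the paper's argument: inclusion of the $\delta=1$ constrained point-to-point event into the point-to-half-space event, then Lemmas~\ref{lem:constraint_pt2pt_exist} and~\ref{lem:pt2HS_exists} (the paper does not even need the constrained half-space event here, since the unconstrained one is larger).

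The reverse inequality is where your plan has a genuine gap, and it sits exactly at the point you flag as ``the main technical obstacle''. The coarse-graining machinery of Section~\ref{sec:coarse_graining} only ever charges each cell of the skeleton the \emph{exit} probability $P(0\leftrightarrow\Delta^{\comp})$ (Lemma~\ref{lem:tree_energy_bound}); it has no mechanism for charging a \emph{directed traversal}. For a truncated thin cone $\cone_{s'_i,\delta''}^{\,n}$ rooted at its apex, the lateral boundary passes at distance $O(1)$ from the root, so $P(0\leftrightarrow\Delta^{\comp})=\Theta(1)$, not $e^{-\tilde{\nu}(s'_i)n(1+o_n(1))}$; no choice of $\delta''$ fixes this, and Lemma~\ref{lem:constraint_pt2pt_s_delta_indep} is irrelevant to it. To make a cell's exit probability comparable to the point-to-point rate one essentially needs the cell to contain a large dilate of $\calU_{\tilde{\nu}}$ --- but that is Lemma~\ref{lem:main_energy}, which in the paper is \emph{deduced from} the present duality lemma, so this route is circular. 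Moreover, even with a good cell, nothing in the tree bound forces every cell-step to advance in the direction $s'_i$, so the claimed count of $\sim N/(n\lrangle{s'_i,s^{\star}})$ traversals at cost $\tilde{\nu}(s'_i)$ each does not follow.

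The fix is much more direct and avoids any new coarse-graining. After restricting to $\Lambda_{\alpha N}$ (the only place coarse-graining is used, exactly as in Lemma~\ref{lem:pt2HS_exists}), take a union bound over the first hitting point $x\in\partial^{\interior}[H_{s^{\star}}(Ns^{\star})]\cap\Lambda_{\alpha N}$: the resulting event $\{0\xleftrightarrow[]{H_{s^{\star}}\setminus H_{s^{\star}}(x)} x\}$ \emph{is already} the constrained point-to-point event $Q_{s^{\star},1}(x/\norm{x},\norm{x})$, whose rate is $\norm{x}\,\tilde{\nu}(x/\norm{x})\geq N\tilde{\nu}(s)/\lrangle{s,s^{\star}}$ by Lemma~\ref{lem:constraint_pt2pt_s_delta_indep} and the minimality characterization of dual directions. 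Your finite net of directions is then still needed, but only to make the $o_N(1)$ in these rates uniform (together with an insertion-tolerance payment $\theta^{-c\epsilon N}$ to move $x$ onto one of finitely many rays), not to run a per-direction coarse-graining.
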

\begin{proof}
	Fix \(s\in\bbS^{d-1}\). Let \(s^{\star}\) be a dual direction of \(s\). Start with the easy inequality. By inclusion of events and Lemma~\ref{lem:constraint_pt2pt_s_delta_indep},
	\begin{equation*}
	P\big(0\leftrightarrow H_{s^{\star}}(Ns^{\star})\big) \geq P\big(0\xleftrightarrow[]{H_{s^{\star}}\setminus H_{s^{\star}}( a N s) } a N s\big) = e^{-a\tilde{\nu}(s)N(1+o_N(1)) }
	\end{equation*}where \(a= \lrangle{s,s^{\star}}^{-1}\). Taking the log, dividing by \(-N\) and letting \(N\to\infty\), one gets \(\nu_H(s^{\star}) \leq a\tilde{\nu}(s)\).
	
	We now proceed to the harder inequality. We use Lemma~\ref{lem:constraint_pt2HS_exists}. The idea is illustrated in Figure~\ref{fig:Convexity_pt_to_HS_favourite_dir}. Then, using the same argument as in the proof of Lemma~\ref{lem:pt2HS_exists}, for some \(\alpha\) large enough,
	\begin{equation*}
		P\big(0\xleftrightarrow[]{H_{s^{\star}}} H_{s^{\star}}(Ns^{\star})\big) \leq C P\big(0\xleftrightarrow[]{H_{s^{\star}}\cap\Lambda_{\alpha N}} H_{s^{\star}}(Ns^{\star}) \big).
	\end{equation*}By a union bound, this is in turn upper bounded by
	\begin{equation}
	\label{eq:constraint_PtoP_to_PtoHS:1}
	C\sum_{x\in \partial^{\interior}[H_{s^{\star}}(Ns^{\star})]\cap \Lambda_{\alpha N} } P(0\xleftrightarrow[]{H_{s^{\star}}\setminus H_{s^{\star}}(x) } x).
	\end{equation}

	Let \(\delta<1\) be such that \(\partial^{\interior}[H_{s^{\star}}(Ns^{\star})]\cap \Lambda_{\alpha N} \subset \cone_{s^{\star},\delta} \) for any \(N\) large enough. Let \(\epsilon>0\) be small. Choose a finite subset \(S\) of \(\bbS^{d-1}\cap \cone_{s^{\star},\delta}\) such that \(|S|\leq c''\epsilon^{-d+1}\) and \(\cone_{s^{\star},\delta} \subset\bigcup_{s'\in S} \cone_{s',\epsilon} \). Denote \(A_{s'}(N) = \partial^{\interior}[Ns^{\star}+H_{s^{\star}}]\cap \cone_{s',\epsilon}\). Then, by insertion tolerance,~\eqref{eq:constraint_PtoP_to_PtoHS:1} is upper bounded by
	\begin{equation*}
	C\sum_{s'\in S}\sum_{x \in A_{s'}(N) } \theta^{-c'\epsilon N} P(0\xleftrightarrow[]{H_{s^{\star}}\setminus H_{s^{\star}}(a_{s'}N s')} a_{s'}N s' )
	\end{equation*}with \(a_{s'}=\lrangle{s',s^{\star}}^{-1}\). By Lemma~\ref{lem:constraint_pt2pt_s_delta_indep}, \(P(0\xleftrightarrow[]{H_{s^{\star}}\setminus H_{s^{\star}}(a_{s'}N s')} a_{s'}N s' ) = e^{-a_{s'}N\tilde{\nu}(s')(1+o_N(1)) }\) with the \(o_N(1)\) depending on \(s'\). Denote it \(o^{s'}_N(1)\). Now, \(a_{s'}\tilde{\nu}(s')\) is minimal if \(s',s^{\star}\) are dual directions. So, combining all the previous observations,
	\begin{equation*}
		P\big(0\xleftrightarrow[]{H_{s^{\star}}} H_{s^{\star}}(Ns^{\star})\big) \leq C' N^{d-1} \epsilon^{1-d} \theta^{-c'\epsilon N} e^{a_{s}N\tilde{\nu}(s) \max_{s'\in S}o_N^{s'}(1)}e^{-a_{s}N\tilde{\nu}(s)}.
	\end{equation*} Taking the log, dividing by \(-N\) and taking \(N\to\infty\) gives
	\begin{equation*}
		\nu_H(s^{\star}) \geq \log(\theta)c' \epsilon  + a_{s}\tilde{\nu}(s).
	\end{equation*} Taking then \(\epsilon\searrow 0\) yields the result.
	
	\begin{figure}[h]
		\centering
		\includegraphics[scale=0.75]{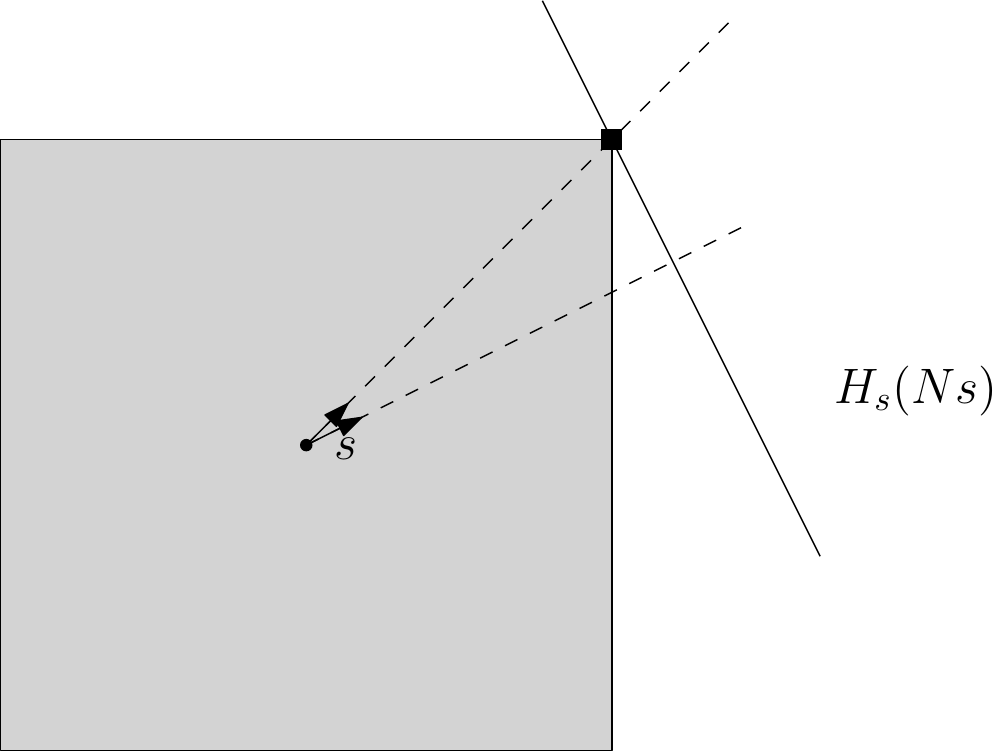}
		\caption{Connection to \(H_s(Ns)\) is made at the point minimizing the distance measured with \(\tilde{\nu}\) (here the square mark). The grey square is a dilation of \(\calU_{\tilde{\nu}}\).}
		\label{fig:Convexity_pt_to_HS_favourite_dir}
	\end{figure}
\end{proof}

\subsection{Final coarse-graining}

Let us summarize what we did so far. First, we constructed a norm using a directed version of the point-to-point connections (Lemmas~\ref{lem:constraint_pt2pt_exist},~\ref{lem:constraint_pt2pt_s_delta_indep}, and~\ref{lem:constraint_rate_norm}). Then, we proved an equivalence (at the level of exponential rates) between directed and un-directed point-to-half-space connections (Lemmas~\ref{lem:pt2HS_exists} and~\ref{lem:constraint_pt2HS_exists}). Finally, we related these two quantities using convex duality (Lemma~\ref{lem:duality}). We can now gather these three results to prove our key estimate
\begin{lemma}
	\label{lem:main_energy}
	For any \(\epsilon>0\), there exists \(L_0\geq 0\) such that for any \(L\geq L_0\),
	\begin{equation}
	P\big(0\leftrightarrow (L\calU_{\tilde{\nu}})^c \big)\leq e^{-L(1-\epsilon)}.
	\end{equation}
\end{lemma}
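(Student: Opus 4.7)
The plan is to cover \((L\calU_{\tilde{\nu}})^c\) by a finite union of tangent half-spaces to \(L\calU_{\tilde{\nu}}\) and control each such half-space event by Lemma~\ref{lem:pt2HS_exists} via a union bound. By the duality Lemma~\ref{lem:duality}, for each direction \(s^{\star}\in\bbS^{d-1}\) the hyperplane \(\{\lrangle{x,s^{\star}} = 1/\nu_H(s^{\star})\}\) is tangent to \(\calU_{\tilde{\nu}}\), so
\[
\calU_{\tilde{\nu}} \;=\; \bigcap_{s^{\star}\in\bbS^{d-1}} \bigl\{x\in\Rd : \lrangle{x,s^{\star}} \leq 1/\nu_H(s^{\star})\bigr\},
\]
and the support function of \(\calU_{\tilde{\nu}}\) is exactly \(1/\nu_H\). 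In particular \(\nu_H\) is continuous on \(\bbS^{d-1}\); moreover it is bounded away from \(0\) (since \(P(0\leftrightarrow H_s(Ns))\leq P(0\leftrightarrow\Lambda_{N-1}^c)\leq e^{-\rateCo(N-1)}\)) and from \(\infty\) (by insertion tolerance, cf.~Remark~\ref{rem:ins_tol}).

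\textbf{Execution.} Fix \(\epsilon>0\). Using uniform continuity of \(1/\nu_H\) on the sphere and the bound \(\inf \nu_H>0\), I would pick a finite \(\eta\)-net \(S=\{s^{\star}_1,\dots,s^{\star}_m\}\subset\bbS^{d-1}\) with \(\eta=\eta(\epsilon)\) small enough that the inner polytopal approximation
\[
\mathcal{P}_\epsilon \;:=\; \bigcap_{i=1}^m \bigl\{x\in\Rd : \lrangle{x,s^{\star}_i} \leq (1-\epsilon)/\nu_H(s^{\star}_i)\bigr\}
\]
is contained in \(\calU_{\tilde{\nu}}\). This uses two elementary facts: (i) \(\mathcal{P}_\epsilon\) is bounded (its diameter depends only on \(\epsilon\) and \(\inf\nu_H\)) because for \(x\in\mathcal{P}_\epsilon\) one can pick \(s^{\star}_i\in S\) with \(\lrangle{x/\norm{x},s^{\star}_i}\geq \sqrt{1-\eta^2}\); (ii) for any \(s^{\star}\in\bbS^{d-1}\) and the closest \(s^{\star}_i\in S\),
\[
\lrangle{x,s^{\star}} \;\leq\; \lrangle{x,s^{\star}_i} + \norm{x}\,\eta \;\leq\; \tfrac{1-\epsilon}{\nu_H(s^{\star}_i)} + C(\epsilon)\,\eta \;\leq\; \tfrac{1}{\nu_H(s^{\star})}
\]
for \(\eta\) small, by uniform continuity of \(1/\nu_H\). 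Scaling by \(L\) gives \(L\mathcal{P}_\epsilon\subset L\calU_{\tilde{\nu}}\), whence
\[
(L\calU_{\tilde{\nu}})^c \;\subset\; (L\mathcal{P}_\epsilon)^c \;=\; \bigcup_{i=1}^m H_{s^{\star}_i}\Bigl(\tfrac{(1-\epsilon)L}{\nu_H(s^{\star}_i)}\, s^{\star}_i\Bigr).
\]
A union bound together with Lemma~\ref{lem:pt2HS_exists} applied with \(N_i=(1-\epsilon)L/\nu_H(s^{\star}_i)\) then gives
\[
P\bigl(0\leftrightarrow (L\calU_{\tilde{\nu}})^c\bigr) \;\leq\; \sum_{i=1}^m e^{-\nu_H(s^{\star}_i)\,N_i\,(1+o_L(1))} \;\leq\; m(\epsilon)\, e^{-L(1-\epsilon)(1+o_L(1))},
\]
which for \(L\) large is at most \(e^{-L(1-2\epsilon)}\). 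Since \(\epsilon>0\) was arbitrary, relabelling gives the claim.

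\textbf{Main obstacle.} The conceptual content has already been packed into the earlier lemmas: existence of \(\nu_H\) (Lemma~\ref{lem:pt2HS_exists}), the directed/undirected equivalence for point-to-half-space events (Lemma~\ref{lem:constraint_pt2HS_exists}), and the convex-duality identity \(\tilde{\nu}(s) = \nu_H(s^{\star})\lrangle{s,s^{\star}}\) (Lemma~\ref{lem:duality}). Given these, what remains is a clean convex-geometric approximation. The one subtle point is that \(\mathcal{P}_\epsilon\) must approximate \(\calU_{\tilde{\nu}}\) from the \emph{inside}, so that \((L\calU_{\tilde{\nu}})^c\subset (L\mathcal{P}_\epsilon)^c\) points in the correct direction; an outer approximation using the exact tangent half-spaces would reverse the inclusion and be useless here.
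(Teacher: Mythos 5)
Your proof is correct, and it takes a genuinely different (and arguably cleaner) route than the paper's. The paper covers \(\partial (L\calU_{\tilde{\nu}})\) by cones \(\cone_{s,\delta}\) for \(s\) in a \(\delta\)-net of the sphere, uses insertion tolerance to reroute each exit point \(x\in\partial^{\exterior}(L\calU_{\tilde{\nu}})\cap\cone_{s,\delta}\) to the representative boundary point \(Ls/\tilde{\nu}(s)\) at cost \(\theta^{-c\delta L}\), and only then passes to the tangent half-space event \(\{0\leftrightarrow \frac{L\lrangle{s,s^{\star}}}{\tilde{\nu}(s)}s^{\star}+H_{s^{\star}}\}\), whose rate is exactly \(L\) by Lemma~\ref{lem:duality}; the \(\epsilon\)-loss comes from the insertion-tolerance cost and the entropy of the covering. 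You instead read Lemma~\ref{lem:duality} as the statement that the support function of \(\calU_{\tilde{\nu}}\) equals \(1/\nu_H\) — the one point worth making explicit is that every \(s^{\star}\in\bbS^{d-1}\) is normal to some supporting hyperplane of the convex body \(\calU_{\tilde{\nu}}\), hence is a dual direction of some \(s\), so the identity applies to every direction in your net — then approximate \(\calU_{\tilde{\nu}}\) from inside by the polytope \(\mathcal{P}_\epsilon\) and bound \(\{0\leftrightarrow(L\calU_{\tilde{\nu}})^c\}\) by a union of finitely many half-space events, each of rate \((1-\epsilon)L(1+o_L(1))\) by Lemma~\ref{lem:pt2HS_exists}. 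This bypasses insertion tolerance and the union over boundary lattice points entirely; your \(\epsilon\)-loss is purely the inner-approximation defect, controlled by the Lipschitz continuity of the support function and the lower bound \(\inf\nu_H>0\). Both arguments ultimately reduce to the same half-space estimate and both yield the lemma. Two cosmetic points: the inclusion \(H_s(Ns)\subset\Lambda_{N-1}^c\) should be \(H_s(Ns)\subset\Lambda_{M}^c\) with \(M<N/\sqrt{d}\) (which still gives \(\inf\nu_H\geq\rateCo/\sqrt{d}>0\)); and one should note that \(N_i=(1-\epsilon)L/\nu_H(s^{\star}_i)\to\infty\) uniformly over the finite net \(S\), so that the \(o_{N_i}(1)\) of Lemma~\ref{lem:pt2HS_exists} is absorbed for \(L\geq L_0(\epsilon)\) — which your last display does implicitly.
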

\begin{proof}
	Fix \(\epsilon>0\). Take \(S\) a finite subset of \(\bbS^{d-1}\) such that \(|S|\leq c\delta^{-d+1}\) and \(\bigcup_{s\in S} \cone_{s,\delta}\cap \calU_{\tilde{\nu}} = \calU_{\tilde{\nu}}\). For \(s\in S\), denote \(A_s = \partial^{\exterior} (L\calU_{\tilde{\nu}}) \cap \cone_{s,\delta}\). Then,
	\begin{align*}
	P\big(0\leftrightarrow (L\calU_{\tilde{\nu}})^c \big) &\leq \sum_{s\in S}\sum_{x\in A_s }P\big(0\xleftrightarrow[]{L\calU_{\tilde{\nu}}} x \big)\\
	&\leq \theta^{-c'\delta L}(c''L^{d-1}) \sum_{s\in S}P\big(0\xleftrightarrow[]{L\calU_{\tilde{\nu}}} \frac{sL}{\tilde{\nu}(s)} \big),
	\end{align*}
	where we used insertion tolerance in the second line. Now, for any fixed \(s\in S\), let \(s^{\star}\) be dual to \(s\). See Figure~\ref{fig:Convexity_connect_calU}. One then has
	\begin{equation*}
	P\big(0\xleftrightarrow[]{L\calU_{\tilde{\nu}}} \frac{sL}{\tilde{\nu}(s)} \big) \leq P\big( 0\leftrightarrow \frac{L\lrangle{s,s^{\star}}}{\tilde{\nu}(s)}s^{\star} + H_{s^{\star}} \big)
	\leq e^{-\frac{L\lrangle{s,s^{\star} }}{\tilde{\nu}(s) }\nu_H(s^{\star})(1+o_L(1)) } = e^{-L(1+o_L(1)) }.
	\end{equation*} Now, the \(o_L(1)\) depends on \(s\). Write it \(o_L^s(1)\). One therefore obtains
	\begin{equation*}
	P\big(0\leftrightarrow (L\calU_{\tilde{\nu}})^c \big) \leq \theta^{-c\delta L}(c'L^{d-1}) c''\delta^{-d+1} e^{-L }e^{\max_{s\in S}o_L^s(1)}.
	\end{equation*}
	Take \(\delta\) small enough and then \(L\) large enough to have \(\theta^{-c\delta L}(c'L^{d-1}) c''\delta^{-d+1} \leq e^{\epsilon L/2}\) and \(\max_{s\in S}o_L^s(1)\leq \epsilon L/2\).
	\begin{figure}[h]
		\centering
		\includegraphics[scale=0.7]{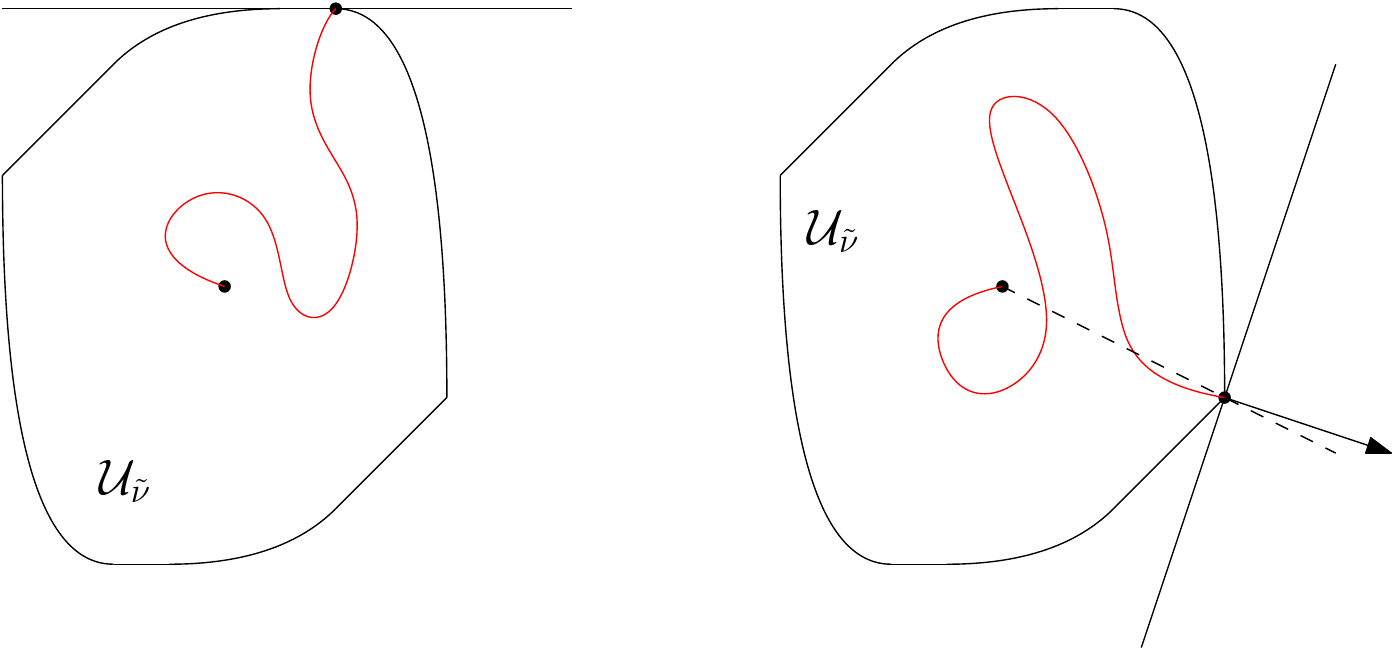}
		\caption{For each direction \(s\), we chose a dual direction for which connecting to a half-spaces is the same as connecting in direction \(s\).}
		\label{fig:Convexity_connect_calU}
	\end{figure}
\end{proof}

We then use the coarse graining procedure of Section~\ref{sec:coarse_graining} with \(\Delta = L\calU_{\tilde{\nu}}\) and \(K=\log(L)^2\): \(\CG_L\equiv \CG_{L\calU_{\tilde{\nu}}, \log(L)^2}\).

As a corollary of this construction, we obtain
\begin{corollary}
	For any \(s\in\bbS^{d-1}\),
	\begin{equation*}
		\overline{\nu}(s)\leq \tilde{\nu}(s) \leq \underline{\nu}(s).
	\end{equation*}In particular, \(\nu\) is well defined and defines a norm on \(\Rd\).
\end{corollary}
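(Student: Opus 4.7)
The easy direction $\overline{\nu}(s)\leq \tilde{\nu}(s)$ is immediate from inclusion of events: since $Q_{s,1}(s,n)\subset \{0\leftrightarrow ns\}$, we have $P(0\leftrightarrow ns)\geq P(Q_{s,1}(s,n)) = e^{-n\tilde{\nu}(s)(1+o_n(1))}$ by Lemma~\ref{lem:constraint_pt2pt_exist} combined with Lemma~\ref{lem:constraint_pt2pt_s_delta_indep}. Taking $-\frac{1}{n}\log$ and $\limsup_{n\to\infty}$ yields the stated inequality.

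For the harder direction $\tilde{\nu}(s)\leq \underline{\nu}(s)$, we exploit the coarse-graining $\CG_L = \CG_{L\calU_{\tilde{\nu}},\log(L)^2}$ introduced just before the corollary and the energy estimate of Lemma~\ref{lem:main_energy}. Fix $\epsilon>0$, take $L$ large, set $\Delta = L\calU_{\tilde{\nu}}$, $K = \log(L)^2$, and apply $\CG_L$ to $C_0$. By Lemmas~\ref{lem:main_energy} and~\ref{lem:tree_energy_bound}, uniformly in $T=(t,f)\in\calT(\Delta,K)$,
\begin{equation*}
P(\CG_L(C_0)=T)\leq \bigl(e^{-L(1-\epsilon)}(1+o_L(1))\bigr)^{|f|}.
\end{equation*}
On $\{0\leftrightarrow ns\}$ the cluster $C_0$ lies in a $O(L)$-Euclidean-neighbourhood of $t(C_0)$, so some $t_i\in t(C_0)$ satisfies $\tilde{\nu}(ns-t_i)=O(L)$; by the reverse triangle inequality, $\tilde{\nu}(t_i)\geq n\tilde{\nu}(s)-O(L)$. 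Each directed edge $\{t_j,t_{j'}\}$ in $T$ has $t_{j'}\in\partial^{\exterior}(t_j+\Delta_K)$, so that $\tilde{\nu}(t_{j'}-t_j)\leq L+c\log(L)^2$ (using equivalence of $\tilde{\nu}$ with the Euclidean norm for the $K$-padding). Applying the triangle inequality along the tree-path from $t_0=0$ to $t_i$ forces
\begin{equation*}
|f|\geq \frac{n\tilde{\nu}(s)-O(L)}{L+c\log(L)^2}.
\end{equation*}

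Summing over trees with $|\calT_l|\leq e^{c'\log(d_{\Delta_K})l}$ (with $d_{\Delta_K}$ polynomial in $L$), we obtain
\begin{equation*}
P(0\leftrightarrow ns)\leq \sum_{l\geq (n\tilde{\nu}(s)-O(L))/(L+c\log(L)^2)} e^{O(\log L)\,l}\,e^{-L(1-\epsilon-o_L(1))\,l} \leq e^{-n\tilde{\nu}(s)\,\tfrac{L(1-\epsilon-o_L(1))}{L+c\log(L)^2}(1+o_n(1))}.
\end{equation*}
Passing to $-\frac{1}{n}\log$, taking $\liminf_{n\to\infty}$, then $L\to\infty$ (so that $L/(L+c\log(L)^2)\to 1$), and finally $\epsilon\searrow 0$, yields $\underline{\nu}(s)\geq\tilde{\nu}(s)$. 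Combined with the tautological inequality $\underline{\nu}(s)\leq\overline{\nu}(s)$ and the easy direction, all three rates coincide, so $\nu(s):=\tilde{\nu}(s)$ is well-defined and is a norm by Lemma~\ref{lem:constraint_rate_norm}. The main technical obstacle is the lower bound on $|f|$: the scheme works because the choice $\Delta=L\calU_{\tilde{\nu}}$ calibrates the geometric reach of one step (a $\tilde{\nu}$-length $L+o(L)$) with its probabilistic cost ($\sim e^{-L(1-\epsilon)}$), producing the sharp rate $\tilde{\nu}(s)$; any smaller $\tilde{\nu}$-bound on individual steps would require matching it with a stronger energy estimate than Lemma~\ref{lem:main_energy}.
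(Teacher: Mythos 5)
Your proof is correct and follows essentially the same route as the paper: the easy inequality by inclusion of events, and the hard one via the coarse-graining $\CG_{L\calU_{\tilde{\nu}},\log(L)^2}$, combining Lemma~\ref{lem:main_energy} with Lemma~\ref{lem:tree_energy_bound}, the tree-counting bound, and the lower bound $|f|\gtrsim n\tilde{\nu}(s)/(L+c\log(L)^2)$ coming from convexity of $\calU_{\tilde{\nu}}$ (which you spell out, correctly, as the triangle inequality along the tree path). The only difference is that you justify the $|f|$ lower bound and the final summation in more detail than the paper's terse version.
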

\begin{proof}
	Fix some \(s\in\bbS^{d-1}\). One has the direct lower bound \(\tilde{\nu}(s) \geq \overline{\nu}(s)\). To obtain the other bound, we use \(\CG_L\). Any cluster \(C\ni 0,Ns\) has \(|f(C)|\geq\frac{N\tilde{\nu}(s)}{L+\log(L)^2}\) (recall \(\calU_{\tilde{\nu}}\) is convex). Fix \(\epsilon>0\) small and take \(L\geq L_0(\epsilon)\). Using the bound on the combinatoric of trees and Lemmas~\ref{lem:tree_energy_bound} and~\ref{lem:main_energy}, one obtains
	\begin{equation*}
		P(0\leftrightarrow Ns) \leq e^{(\epsilon + o_L(1)) N} e^{-N\tilde{\nu}(s)}.
	\end{equation*}Taking the log, dividing by \(-N\) and letting \(N\to\infty\) gives \(\underline{\nu}(s)\geq \tilde{\nu}(s) -\epsilon + o_L(1)\). Letting \(L\to\infty\) and then \(\epsilon\searrow 0\) give the result.
\end{proof}

\section*{Acknowledgements}

The author thanks the university Roma Tre for its hospitality and is supported by the Swiss NSF through an early PostDoc.Mobility Grant. The author also thanks Yvan Velenik for a careful reading of the manuscript and for useful discussions.

\appendix

\section{Relaxed Fekete's Lemma}

We use this Lemma which proof is an easy adaptation of the usual Fekete's Lemma.
\begin{lemma}
	\label{app:quasi_sub_add}
	Suppose \((a_n)_{n\geq 1}\) is a sequence with \(c_-n<a_n<c_+n\) for some \(0<c_-\leq c_+ <\infty\). Suppose that there exists \(N_0\geq 1\) and functions \(f,g:(\bbZ_{>0})\to\bbZ\) such that
	\begin{itemize}
		\item \(f(n) = o(n)\), \(g(n) = o(n)\),
		\item For any \(n,m\geq N_0\), \(a_{n+m+g(\min(n,m))}\leq a_n+a_m+f(\min(n,m))\).
	\end{itemize}Then, the limit \(\lim_{n\to\infty} \frac{a_n}{n}\) exists in \([c_-,c_+]\).
\end{lemma}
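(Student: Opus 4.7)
The plan is to adapt the classical Fekete argument, using the a priori linear bounds on $a_n$ to absorb the sub-linear perturbations $f$ and $g$.

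Write $U = \limsup_n a_n/n$ and $L = \liminf_n a_n/n$. The hypothesis $c_- n < a_n < c_+ n$ places both in $[c_-, c_+]$, so it suffices to prove $U \leq L$.

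Fix $m \geq N_0$ large enough that $m + g(m) > 0$, and consider $N \gg m$. I will iterate the assumed inequality with the second argument held fixed to $m$: set $N^{(0)} = N$ and $N^{(j+1)} = N^{(j)} - m - g(m)$, stopping at the largest $J$ for which $N^{(J)} \geq m$. Under this stopping rule the condition $\min(N^{(j+1)}, m) = m$ holds throughout, so the assumption applies at every step with the constant values $g(m)$ and $f(m)$ and yields
\[
a_{N^{(j)}} \leq a_{N^{(j+1)}} + a_m + f(m).
\]
Telescoping gives
\[
a_N \leq a_{N^{(J)}} + J \bigl( a_m + f(m) \bigr),
\]
with $J = (N - N^{(J)})/(m + g(m))$ and $N^{(J)} \in [m, 2m + g(m))$. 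Since $a_{N^{(J)}} \leq c_+ N^{(J)} = O(m)$ by the a priori upper bound, dividing by $N$ and letting $N \to \infty$ with $m$ fixed gives
\[
U \leq \frac{a_m + f(m)}{m + g(m)}.
\]
Choosing then $m = m_k$ along a subsequence realising $L$ and using $f(m_k)/m_k, g(m_k)/m_k \to 0$, the right-hand side tends to $L$, whence $U \leq L$.

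The only delicate point is the bookkeeping of the iteration, namely ensuring $\min(N^{(j+1)}, m) = m$ throughout so that the perturbations remain the constant values $g(m)$ and $f(m)$; the stopping rule $N^{(J)} \geq m$ handles this automatically. Beyond that the argument is essentially Fekete's classical proof, with the linearity $c_- n < a_n < c_+ n$ used both to dispose of the error terms coming from $f, g$ and to absorb the residual $a_{N^{(J)}}$ at the end of the iteration, so no substantive obstacle is anticipated.
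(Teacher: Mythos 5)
Your proof is correct and follows essentially the same route as the paper's: iterate the perturbed sub-additivity inequality against a fixed block length $m$ chosen along a subsequence realising the $\liminf$, absorb the residual block via the a priori bound $a_n < c_+ n$, and let $N\to\infty$, then $m\to\infty$ along the subsequence. Your explicit stopping rule guaranteeing $\min(N^{(j+1)},m)=m$ is a careful rendering of the bookkeeping the paper leaves implicit; no substantive difference.
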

\begin{proof}
	Let \(\underline{l} = \liminf_{n\to\infty}\frac{a_n}{n} \). Let \((n_k)_{k\geq 1}\) be an increasing sequence such that \(\lim_{k\to\infty} \frac{a_{n_k}}{n_k} =\underline{l} \). Fix \(k\) such that \(n_k\geq N_0\). For any \(N\) large enough, \(N= q (n_k+g(n_k)) + r\) with \(r<n_k+g(n_k)\). Then, by \(q-1\) iterations of our sub-additivity-type hypotheses
	\begin{equation*}
		\frac{a_N}{N} \leq \frac{(q-1)(a_{n_k} + f(n_k)) + a_{n_k+g(n_k) + r}}{q (n_k+ g(n_k)) + r} = \underline{l} + o_k(1)+o_{n_k}(1)+o_N(1).
	\end{equation*}Taking \(N\to\infty\), one obtains
	\begin{equation*}
		\limsup_{N\to\infty}\frac{a_N}{N} \leq \underline{l} + o_k(1)+o_{n_k}(1).
	\end{equation*}\(k\) being arbitrary, one can now take \(k\to\infty\) to obtain the wanted result.
\end{proof}

\bibliographystyle{plain}
\bibliography{BIGbib}

\end{document}